\theoremstyle{plain}
\newtheorem{theorem}{Theorem}
\newtheorem{proposition}{Proposition}
\newtheorem{lemma}{Lemma}
\newtheorem{example}{Example}
\newtheorem{remark}{Remark}
\newtheorem{corollary}{Corollary}
\numberwithin{equation}{section}
\begin{document}

\title[Foliations by planes]
      {A note on open 3-manifolds supporting foliations by planes}

\author{Carlos Biasi}\thanks{The first author was supported by FAPESP of Brazil
Grant 98/13400-5}

\author{Carlos Maquera}\thanks{The second author was supported by FAPESP of Brazil
Grant 02/09425-0}

\keywords{Foliation by planes, Open manifolds, Fundamental group,
Free group}

\subjclass[2000]{37C85, 57R30}
\date{\today}

\address{Carlos Biasi and Carlos Maquera\\
 Universidade de S{\~a}o Paulo - S{\~a}o
Carlos \\Instituto de Ci{\^e}ncias
Matem{\'a}ticas e de Computa\c{c}{\~a}o\\
Departamento de Matemática\\
Av. do Trabalhador S{\~a}o-Carlense 400 \\
13560-970 S{\~a}o Carlos, SP\\
Brazil}
 \email{biasi@icmc.usp.br}
 \email{cmaquera@icmc.usp.br}
\maketitle

 \begin{abstract}
 We show that if $N$, an open connected $n$-manifold with finitely generated
 fundamental group, is $C^{2}$ foliated by closed planes, then $\pi_{1}(N)$ is a free
 group.
 This implies that if $\pi_{1}(N)$ has an Abelian subgroup of rank greater than
 one, then $\mathcal{F}$ has at least a non closed leaf. Next, we
 show that if $N$ is three dimensional with fundamental group
 abelian of rank greater than one, then $N$ is homeomorphic to
 $\mathbb{T}^2\times \mathbb{R}.$ Furthermore, in this case we
 give a complete description of the foliation.
 \end{abstract}

 \medskip
 \medskip

 \thispagestyle{empty}

 \section{Introduction}
 Two foliated manifolds $(N_{1},\mathcal{F}_{1})$
 and $(N_{2},\mathcal{F}_{2})$ are said to be $C^{r}$ \textit{conjugate}
 if there exists a $C^{r}$ homeomorphism $h:N_{1}\to N_{2}$ that takes leaves of
 $\mathcal{F}_{1}$ onto leaves of $\mathcal{F}_{2}.$
 Let $\mathcal{F}$ be a $C^{2}$ foliation defined on an $n$-manifold
 $N$. If all the leaves are diffeomorphic to $\mathbb{R}^{n-1}$ the
 foliation $\mathcal{F}$ is called a \textit{foliation by planes}.
 $\mathcal{F}$ is called a \textit{Reeb foliation} of $N$ if both $N$ and
 $\mathcal{F}$ are orientable, each leaf of $\mathcal{F}$ in the interior of $N$
 is homeomorphic to $\mathbb{R}^{n-1}$ and if
 $\partial N \neq \emptyset$, then each component of $\partial N$ is a leaf of $\mathcal{F}$
 is homeomorphic to the torus $\mathbb{T}^{n-1}$. Note that, each transversally orientable foliation
 by planes $\mathcal{F}$ on an orientable open $n$-manifold is a
 Reeb foliation.

 Rosenberg--Roussarie in
 \cite{rosen-rossa} have proved that the only compact connected 3-manifolds which
 admit Reeb foliations are $\mathbb{T}^3$, $D^2 \times \mathbb{S}^1$ and $\mathbb{T}^2
 \times [0,1]$. Novikov \cite{Novikov} has proved that all Reeb foliations of
 $D^2 \times \mathbb{S}^1$ are topologically equivalent to the Reeb component.
 Chatelet-Rosenberg \cite {Cha-Rosen} and Rosenberg-Roussarie \cite{rosen-rossa1}
 have classified Reeb foliations of class $C^2$ of $\mathbb{T}^2 \times [0,1]$ and  $\mathbb{T}^3$, respectively.
 In both cases, the authors prove, between other things, that
 all the leaves of $\mathcal{F}$ are dense (in particular there is no closed
 leaf). Let us observe that in these two cases the fundamental group
 of $N$ is abelian of rank grater than one.
 In higher dimension, that is when $N$ is closed and $n$ dimensional, Rosenberg \cite{rosenberg2} proved
 that $\pi_1(N)$ is free abelian. Recently, \'Alvarez L\'opez-Arraut-Biasi \cite{ArrBia} proved that if
 $N\setminus \{p_1,\dots,p_k\}$ is foliated by closed planes, then
 $N=S^n$ (the $n$-sphere) and $k=1.$

 Palmeira \cite{pal} studied transversally orientable $C^{2}$ foliations
 by closed planes on open $n$-manifolds $N$, which
 has finitely generated fundamental group, and proved that
 this foliations are $C^{2}$ conjugates to the product
 of a foliation on an open surface by $\mathbb{R}^{n-2}$. This implies
 that the fundamental group of $N$ is free. Remember that any free group is the
 free product $H_1\ast \cdots\ast H_{\ell}$ where
 $H_i$ is isomorphic to $\{0\}$ or $\mathbb{Z}.$

 In this paper we consider foliations by planes on an open and connected
 $n$-manifold $N$. We try to initiate the classification of some $C^2$ foliations by planes in open
 manifolds having at least one leaf not closed.

  Our first result is the following

 \begin{theorem}
 \label{maintheorem}
 Let $N$ be an open connected $n$-manifold with a $C^{2}$ foliation
 by closed planes. If $\pi_{1}(N)$ is finitely generated, then $\pi_{1}(N)$ is free.
 \end{theorem}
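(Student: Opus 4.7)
The strategy is to reduce to Palmeira's theorem after two auxiliary steps: (i) passing to a cover that makes $\mathcal{F}$ transversally orientable, and (ii) ruling out torsion in $\pi_1(N)$. If $\mathcal{F}$ is already transversally orientable, Palmeira's theorem gives $N\cong S\times\mathbb{R}^{n-2}$ for an open surface $S$, so $\pi_1(N)\cong\pi_1(S)$ is free and we are done. Otherwise, let $\hat p\colon \hat N\to N$ be the connected double cover classified by the transverse-orientation character $\pi_1(N)\to\mathbb{Z}/2$. The lifted foliation $\hat{\mathcal F}$ is transversally orientable; each leaf is a connected covering of a plane leaf and hence still homeomorphic to $\mathbb{R}^{n-1}$; and each leaf is closed in $\hat N$, since the leaves of $\hat{\mathcal F}$ are the connected components of the preimages under $\hat p$ of the closed leaves downstairs. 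Finally $\pi_1(\hat N)$ is finitely generated as an index-$2$ subgroup of $\pi_1(N)$.

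Palmeira's theorem applied to $(\hat N,\hat{\mathcal F})$ then shows that $\pi_1(\hat N)$ is free, so $\pi_1(N)$ is virtually free. To upgrade this to \emph{free} it suffices to show $\pi_1(N)$ is torsion-free, since the Stallings--Swan theorem asserts that a torsion-free virtually free group is itself free (equivalently, has cohomological dimension at most one). For torsion-freeness, apply Palmeira once more to the universal cover $\tilde N\to N$, which inherits a $C^2$ foliation by closed planes and is simply connected (so transverse orientability is automatic). This yields $\tilde N\cong S_0\times\mathbb{R}^{n-2}$ with $S_0$ a simply connected open surface; hence $S_0\cong\mathbb{R}^2$ and $\tilde N\cong\mathbb{R}^n$. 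The deck action of $\pi_1(N)$ on $\tilde N\cong\mathbb{R}^n$ is free, so any prime-order element would generate a $\mathbb{Z}/p$ acting freely on the contractible finite-dimensional space $\mathbb{R}^n$, contradicting Smith theory. Therefore $\pi_1(N)$ has no torsion, and Stallings--Swan completes the proof.

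I expect the main obstacle to be this last synthesis: combining the transverse-orientation reduction, the torsion-freeness argument through Smith theory on $\tilde N\cong\mathbb{R}^n$, and the nontrivial group-theoretic input of Stallings--Swan. A more hands-on alternative would be to analyze directly the free involution $\tau\colon\hat N\to\hat N$ exchanging the sheets of the double cover on the product decomposition $\hat N\cong S\times\mathbb{R}^{n-2}$ furnished by Palmeira, and to read off a structure theorem for $N$ from it; but this appears more delicate than the group-theoretic route sketched above.
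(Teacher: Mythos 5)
Your route is essentially the paper's: reduce to Palmeira's theorem through a double cover, obtain torsion-freeness of $\pi_1(N)$ by applying the simply connected case of Palmeira to the universal cover (so $\widetilde N\cong\mathbb{R}^n$, on which no nontrivial finite group acts freely), and conclude with Stallings (here in the Stallings--Swan form) that a finitely generated, torsion-free, virtually free group is free. The only real variation is cosmetic: the paper rules out torsion by an Euler characteristic count for the cover associated to a finite subgroup, where you invoke Smith theory; both are fine.

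There is, however, a genuine gap: you never address orientability of the manifold itself, and Palmeira's theorem as stated (and as used here) requires $N$ to be an \emph{orientable} open manifold in addition to $\mathcal{F}$ being transversally orientable. Orientability of $N$ does not follow from transverse orientability of a foliation by closed planes: take $\Sigma$ the open M\"obius band foliated by the fibers of its fibration over $\mathbb{S}^1$ and set $N=\Sigma\times\mathbb{R}^{n-2}$ with leaves $(\mathrm{fiber})\times\mathbb{R}^{n-2}$; all leaves are closed planes and the normal bundle is orientable (pulled back from $T\mathbb{S}^1$), yet $N$ is non-orientable, so the conclusion ``$N\cong S\times\mathbb{R}^{n-2}$ with $S$ an orientable surface'' is false for it and the hypothesis cannot simply be dropped. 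Consequently your first step (applying Palmeira directly to $(N,\mathcal{F})$ when $\mathcal{F}$ is transversally orientable) and your application of Palmeira to the transverse-orientation double cover $\widehat N$ (which may still be non-orientable) are not justified as written; note that the theorem you are proving explicitly allows non-orientable $N$. The repair is easy and stays inside your own scheme: pass to the finite cover, of index at most $4$, corresponding to the intersection of the kernels of the orientation character of $N$ and the transverse-orientation character of $\mathcal{F}$; there Palmeira applies, its fundamental group is free and finitely generated, so $\pi_1(N)$ is virtually free, and your torsion-freeness plus Stallings--Swan argument (valid for any finite index, not just $2$) finishes the proof. This is exactly how the paper treats the non-orientable case, via an extra orientation double cover and a second application of Stallings' lemma; your universal-cover argument for torsion-freeness needs no change, since the universal cover is simply connected and hence orientable.
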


 Note that in Theorem \ref{maintheorem} it is not necessary to assume that $N$ is
 orientable and that the foliation is transversally orientable.
 As a consequence of Theorem \ref{maintheorem}, we have

 \begin{corollary}
 \label{cor:one}
  Let $N$ be an open connected $n$-manifold with a $C^{2}$ foliation
 by planes. If $\pi_{1}(N)$ has an Abelian subgroup of rank greater than
 one, then $\mathcal{F}$ has at least a leaf which is not closed.
 \end{corollary}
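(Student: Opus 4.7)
The plan is to argue by contradiction, reducing to Theorem \ref{maintheorem} together with the fact that abelian subgroups of free groups are cyclic. So suppose, toward a contradiction, that every leaf of $\mathcal{F}$ is closed; I want to derive that $\pi_1(N)$ cannot contain an abelian subgroup of rank greater than one. If one takes for granted that $\pi_{1}(N)$ is finitely generated (as in the setting of Theorem \ref{maintheorem}), the conclusion is immediate: Theorem \ref{maintheorem} forces $\pi_{1}(N)$ to be free, and by Nielsen--Schreier every subgroup of a free group is free; but a free abelian group is cyclic, so no subgroup of $\pi_1(N)$ can be abelian of rank $\geq 2$, contradicting the hypothesis.

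If finite generation of $\pi_{1}(N)$ is not assumed, I would reduce to that case via a covering space argument. Given the abelian subgroup $A\subset\pi_{1}(N)$ of rank $>1$, choose two $\mathbb{Z}$-independent elements $a,b\in A$; these generate a subgroup $H\cong\mathbb{Z}^{2}\subset\pi_{1}(N)$, which is certainly finitely generated. Let $p\colon\widetilde{N}\to N$ be the covering corresponding to $H$, so $\pi_{1}(\widetilde{N})\cong\mathbb{Z}^{2}$. Pull back the foliation: set $\widetilde{\mathcal{F}}=p^{*}\mathcal{F}$.

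The key verification is that $\widetilde{\mathcal{F}}$ is again a foliation by closed planes. Each leaf $L$ of $\mathcal{F}$ is simply connected (diffeomorphic to $\mathbb{R}^{n-1}$), so every connected component of $p^{-1}(L)$ maps homeomorphically onto $L$ and is therefore itself diffeomorphic to $\mathbb{R}^{n-1}$. For closedness, if $L\subset N$ is closed then $p^{-1}(L)$ is closed in $\widetilde{N}$, and each leaf of $\widetilde{\mathcal{F}}$ above $L$ is a connected component of $p^{-1}(L)$; since $\widetilde{\mathcal{F}}$ has the structure of a $C^{2}$ foliation (its leaves are the path components of the preimages), these components are closed.

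With this in hand, Theorem \ref{maintheorem} applies to $(\widetilde{N},\widetilde{\mathcal{F}})$ and yields that $\pi_{1}(\widetilde{N})\cong\mathbb{Z}^{2}$ is free, which is absurd. This contradicts the assumption that every leaf of $\mathcal{F}$ is closed, proving the corollary. The only potentially delicate point is the lifting step, and it is mild: it rests only on the fact that a covering map pulls closed sets back to closed sets and that simply connected leaves lift bijectively onto their components in the cover.
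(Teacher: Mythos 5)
Your proposal is correct and follows essentially the paper's own route: the paper presents Corollary \ref{cor:one} as a consequence of Theorem \ref{maintheorem} and proves the equivalent statement (Theorem \ref{teo:rank}) by exactly your reduction, namely passing to the covering space associated with the abelian subgroup and using that the lifted foliation is again by closed planes. The only cosmetic difference is that on that cover the paper reads off the rank bound directly from Palmeira's product structure $\Sigma\times\mathbb{R}^{n-2}$, whereas you apply Theorem \ref{maintheorem} to the cover together with the fact that a free group contains no subgroup isomorphic to $\mathbb{Z}^{2}$.
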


 Obviously, the reciprocal of this result is not true, see (2) of Example
 \ref{exem:example 2}. However, the previous corollary motivates the
 following natural question:
 ``if $\pi_{1}(N)$ has an Abelian subgroup (or is Abelian) of rank greater than
 one, then what we can say on the set of  non closed leaves?"
 In this direction, when $N$ is three dimensional, we obtain the following
 result:

 \begin{theorem}
 \label{teo:subgrabelian}
 Let $N$ be an open connected orientable $3$-manifold with a $C^{2}$ foliation
 by planes $\mathcal{F}$ and assume that $\pi_{1}(N)$ is finitely
 generated. Then
 \begin{enumerate}
    \item $\pi_1(N)=H_1\ast \cdots \ast H_{\ell},$ where $H_j,\ j=1,\dots, \ell,$
    is a subgroups of $\pi_1(N)$ isomorphic to $\mathbb{Z}$ or $\mathbb{Z}^2.$
    \item For each $j=1,\dots, \ell,$ satisfying that $H_j$ is isomorphic to $\mathbb{Z}^2,$
    there exists an open submanifold $N_j$ of $N$ that is invariant
    by $\mathcal{F}$, such that $\pi_1(N_j)=H_j$ and $(\mathcal{F}|_{N_j}, N_j)$ is
    topologically conjugated to $(\mathcal{F}_0\times \mathbb{R}, \mathbb{T}^2\times \mathbb{R})$,
    where $\mathcal{F}_0$ is the foliation on the 2-torus $\mathbb{T}^2$ which
    is defined by the irrational flow.
 \end{enumerate}
 \end{theorem}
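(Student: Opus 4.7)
\emph{Plan.} I would handle the two statements in reverse order: first build the invariant submanifolds promised in~(2), then deduce the free-product decomposition of~(1) by applying Theorem~\ref{maintheorem} to the complement. The $\mathbb{Z}^2$ factors will come from the $\mathbb{T}^2\times\mathbb{R}$ pieces of~(2), and the $\mathbb{Z}$ factors from the free groups that Theorem~\ref{maintheorem} produces on the remainder.

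Fix a subgroup $H\cong\mathbb{Z}^2$ of $\pi_{1}(N)$ and let $p_{H}\colon\widehat N_{H}\to N$ be the connected covering with $\pi_{1}(\widehat N_{H})=H$. Since every leaf of $\mathcal{F}$ is simply connected, the pulled-back foliation $\widehat{\mathcal{F}}$ is again by planes; since $\mathbb{Z}^{2}$ is not free, Theorem~\ref{maintheorem} forces $\widehat{\mathcal{F}}$ to have a non-closed leaf. To pin down the local model I would lift one more step: Palmeira's theorem gives $\widetilde N\cong\mathbb{R}^{3}$ with lifted foliation conjugate to $\mathcal{L}\times\mathbb{R}$, where $\mathcal{L}$ is a foliation of $\mathbb{R}^{2}$ by lines, and $H$ acts on $\mathbb{R}^{2}$ preserving $\mathcal{L}$, inducing an action on the (possibly non-Hausdorff) one-dimensional leaf space $Q=\mathbb{R}^{2}/\mathcal{L}$. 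A non-closed leaf of $\widehat{\mathcal{F}}$ corresponds to a non-discrete $H$-orbit in $Q$, and the standard analysis of free $\mathbb{Z}^{2}$-actions on an orientable $1$-manifold with a non-discrete orbit identifies the action with the linear model in which $H$ acts by translations of an interval through a rank-two subgroup of $\mathbb{R}$. The $\widehat{\mathcal{F}}$-saturation $\widehat N_{j}^{*}\subset\widehat N_{H}$ of the corresponding piece of $Q$ is an open invariant submanifold, and compactifying its two ends yields a Reeb foliation on $\mathbb{T}^{2}\times[0,1]$; the Chatelet--Rosenberg classification then identifies $(\widehat{\mathcal{F}}|_{\widehat N_{j}^{*}},\widehat N_{j}^{*})$ with $(\mathcal{F}_{0}\times\mathbb{R},\mathbb{T}^{2}\times\mathbb{R})$.

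The covering map $p_{H}$ restricts to an injection on $\widehat N_{j}^{*}$, since any deck transformation of $\widehat N_{H}\to N$ sending $\widehat N_{j}^{*}$ into itself preserves $\pi_{1}(\widehat N_{j}^{*})=\mathbb{Z}^{2}$ and must therefore lie in $H$; setting $N_{j}=p_{H}(\widehat N_{j}^{*})$ delivers~(2). Performing the construction for a maximal family of non-conjugate rank-two abelian subgroups $H$ exhausts all non-closed leaves of $\mathcal{F}$, so $\mathcal{F}$ restricts to a foliation by closed planes on $N\setminus\bigcup_{j}N_{j}$. Theorem~\ref{maintheorem} then makes the fundamental group of each component of this complement free, and a van Kampen / graph-of-groups assembly along the torus ends of the $N_{j}$ yields the decomposition $\pi_{1}(N)=H_{1}\ast\cdots\ast H_{\ell}$ of~(1).

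The main obstacle is the middle step: controlling the free $\mathbb{Z}^{2}$-action on the generally non-Hausdorff leaf space $Q$ well enough to conclude that it is conjugate to the linear translation model, and then certifying that the saturation of the resulting piece really gives a topological $\mathbb{T}^{2}\times\mathbb{R}$ summand of $\widehat N_{H}$ rather than merely a dense open piece with complicated boundary. Once that is in place, everything else reduces to covering-space bookkeeping and the classifications already cited in the introduction.
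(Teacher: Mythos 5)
Your plan has two genuine gaps, and the first is exactly the step you flag as ``the main obstacle'': the analysis of the free $\mathbb{Z}^2$-action on the (non-Hausdorff) leaf space $Q$ of the lifted foliation, the identification of that action with a linear translation model, and the claim that the $\widehat{\mathcal{F}}$-saturation of the corresponding piece of $Q$ is a tame open $\mathbb{T}^2\times\mathbb{R}$ whose two ends can be compactified to give a Reeb foliation of $\mathbb{T}^2\times[0,1]$. None of this is proved, and it is precisely the substance of statement (2); as written the proposal assumes what it must establish. Note also that your argument for injectivity of $p_H$ on $\widehat N_j^{*}$ does not parse: $\widehat N_H\to N$ is in general not a regular covering, so one cannot speak of deck transformations realizing arbitrary elements of $\pi_1(N)$; injectivity would have to be checked through the action of $\pi_1(N)$ on the universal cover. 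The paper avoids the leaf-space route altogether: by Theorem \ref{teo:rosenberg} $N$ is irreducible, so Gabai's Theorem \ref{teo:gabaiNaberto} supplies an incompressible torus $T_j\subset N$ with $\pi_1(T_j)=H_j$; after putting $T_j$ in general position and removing centers by Rosenberg's isotopy argument, the induced foliation on $T_j$ has no singularities and (by incompressibility plus the leaves being planes) no closed orbits, hence is equivalent to the irrational flow; finally the $\mathcal{F}$-saturation $N_j$ of $T_j$ is shown to be $\mathbb{T}^2\times\mathbb{R}$ by a homotopy-theoretic argument (asphericity makes $T_j\hookrightarrow N_j$ a homotopy equivalence, Lemma \ref{lem:topalg} gives a retraction $r:N_j\to T_j$, and pulling back the tangent bundle of the foliation produces a vector field tangent to $\mathcal{F}$ and transverse to $T_j$ along which one flows). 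So the torus is found inside $N$ itself, and no covering-space bookkeeping or classification of $\mathbb{Z}^2$-actions on $1$-manifolds is needed.

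The second gap is your route to (1). The claim that, after removing the $N_j$ attached to a maximal family of rank-two abelian subgroups, the restricted foliation has only closed leaves is false: the paper's own example $(\mathbb{T}^2\setminus\{p\})\times\mathbb{R}^{n-2}$ has every leaf dense while $\pi_1$ is free of rank two, so non-closed leaves need not be detected by any $\mathbb{Z}^2$ in $\pi_1$. Hence you cannot invoke Theorem \ref{maintheorem} on the complement, and its components' fundamental groups need not even be finitely generated. Moreover, even granting a decomposition of $N$ along the boundary tori of the pieces $N_j$, van Kampen along incompressible tori yields amalgamated products over $\mathbb{Z}^2$, not a free product, so an additional argument would be required to see that the splitting is free. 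The paper instead obtains (1) directly from Proposition \ref{prop:abeliansubgroup} (no abelian subgroup of rank $\geq 3$ can occur) together with the structure theory of $3$-manifold groups, without any geometric cut-and-paste.
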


 By using this theorem we obtain.

 \begin{theorem}
 \label{teo:abeliano}
 Let $N$ be an open connected orientable $3$-manifold with a $C^{2}$ foliation
 by  planes $\mathcal{F}$ and assume that $\pi_{1}(N)$ is finitely generated.
 If  $\pi_{1}(N)$ is an abelian group of rank two, then $N$ is homeomorphic to
 $\mathbb{T}^2\times \mathbb{R}$. Moreover, there exists an open submanifold
 $N_0$ which is homeomorphic to $\mathbb{T}^2\times \mathbb{R}$ and invariant by $\mathcal{F}$,
 such that
  \begin{enumerate}
    \item $(\mathcal{F}|_{N_0}, N_0)$ is topologically conjugated
    to $(\mathcal{F}_0\times \mathbb{R}, \mathbb{T}^2\times \mathbb{R})$, where
    $\mathcal{F}_0$ is the foliation on the 2-torus defined by the irrational flow.

    \item Every connected component $B$ of $N\setminus \overline{N_0}$ is foliated
    by closed planes, hence, homeomorphic to $\mathbb{R}^3.$
    Furthermore, each leaf $L$ in $B$ separates $N$ in two connected
    components.
  \end{enumerate}
 \end{theorem}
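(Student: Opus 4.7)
The plan is to use Theorem~\ref{teo:subgrabelian} to locate an invariant ``core'' of $N$ inside which the foliation is the product of the irrational flow, then use Palmeira's theorem to analyze the complementary pieces, and finally assemble the pieces topologically to recover $\mathbb{T}^2\times\mathbb{R}$.

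First, I would exploit the abelian hypothesis to collapse the free-product decomposition furnished by Theorem~\ref{teo:subgrabelian}(1): any free product $H_1\ast\cdots\ast H_\ell$ with $\ell\geq 2$ nontrivial factors is non-abelian, since nontrivial elements from different factors have distinct reduced forms $ab\neq ba$. Hence, as $\pi_1(N)\cong\mathbb{Z}^2$ is abelian, we must have $\ell=1$, and the unique factor $H_1$ must be the rank-two summand $\mathbb{Z}^2$. Applying Theorem~\ref{teo:subgrabelian}(2) to $H_1$ yields an open $\mathcal{F}$-invariant submanifold $N_0$ with $(\mathcal{F}|_{N_0},N_0)$ topologically conjugate to $(\mathcal{F}_0\times\mathbb{R},\mathbb{T}^2\times\mathbb{R})$; in particular $N_0\cong\mathbb{T}^2\times\mathbb{R}$. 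This establishes claim~(1).

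Next, for each connected component $B$ of the open $\mathcal{F}$-invariant set $N\setminus\overline{N_0}$, I would first show $\pi_1(B)$ is trivial: a Seifert--van Kampen argument applied to the pieces $N_0$ and the components of $N\setminus\overline{N_0}$ (thickened along a collar of $\partial N_0$ built from leaves of $\mathcal{F}$) gives a free-product expression for $\pi_1(N)$ into which $\pi_1(N_0)\cong\mathbb{Z}^2$ already embeds as all of $\pi_1(N)$; any nontrivial contribution of $\pi_1(B)$ would enlarge $\pi_1(N)$ beyond $\mathbb{Z}^2$ or destroy abelianity, contradicting the uniqueness fixed in the first step. Hence $\pi_1(B)=0$. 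Applying Palmeira's theorem \cite{pal} to the simply connected open $3$-manifold $B$ equipped with the $C^2$ plane foliation $\mathcal{F}|_B$ yields a conjugacy to the trivial product $(\mathbb{R}^3,\{\mathbb{R}^2\times\{t\}\}_{t\in\mathbb{R}})$; in particular $B\cong\mathbb{R}^3$ and every leaf of $\mathcal{F}|_B$ is closed in $B$. A connectedness argument using that every leaf of $\mathcal{F}|_{N_0}$ is dense in $N_0$ prevents a leaf $L\subset B$ from accumulating on $N_0$ (else $\overline{L}$ would be an invariant connected set straddling the disjoint invariant pieces $B$ and $N_0$ with $\overline{L}\supset\overline{N_0}$, which is incompatible with $L$ being a plane in $B$); consequently $L$ is closed in $N$ and, being a transversely oriented plane in the orientable $3$-manifold $N$, separates $N$ into two components. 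This gives~(2).

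It remains to show $N\cong\mathbb{T}^2\times\mathbb{R}$. The product structure on $(\mathcal{F}|_{N_0},N_0)$ has two ends, corresponding to $\mathbb{T}^2\times\{\pm\infty\}$, and each complementary component $B\cong\mathbb{R}^3$ attaches to $N_0$ along a single closed plane leaf of $\partial N_0$; such an attachment of $\mathbb{R}^3$ along a boundary plane to one end of $\mathbb{T}^2\times\mathbb{R}$ can be absorbed into the $\mathbb{R}$-factor and yields again $\mathbb{T}^2\times\mathbb{R}$, so $N\cong\mathbb{T}^2\times\mathbb{R}$. I expect the main obstacle to lie in this final step: controlling which components $B$ occur and along which boundary leaves of $\overline{N_0}$ they attach, and passing from the local attachment picture to a global homeomorphism. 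The key leverage will be the rigidity of the product form $\mathcal{F}_0\times\mathbb{R}$ on $N_0$ combined with the simply connected product form of $\mathcal{F}|_B$, which together pin down the attachment geometry as the trivial one.
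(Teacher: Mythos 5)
Your handling of claim (1) and of the triviality of $\pi_1(B)$ follows the paper's own route (a nontrivial free product is non-abelian, so Theorem \ref{teo:subgrabelian} applies with $\ell=1$; van Kampen gives $\pi_1(B)=\{1\}$). Two later steps, however, contain genuine errors. First, Palmeira's theorem applied to the simply connected piece $B$ does \emph{not} give a conjugacy to the trivial product $(\mathbb{R}^3,\{\mathbb{R}^2\times\{t\}\}_t)$: it gives $(\mathbb{R}^2\times\mathbb{R},\mathcal{F}_b\times\mathbb{R})$ where $\mathcal{F}_b$ is an arbitrary foliation of the plane by lines, possibly with Reeb components and a non-Hausdorff leaf space. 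The conclusion you need (leaves of $\mathcal{F}|_B$ are closed) is still true, but it requires either the classical fact that every leaf of a foliation of $\mathbb{R}^2$ is a closed subset, or the paper's argument: a non-closed leaf in $B$ would produce a closed transversal $\gamma\subset B$, which by Haefliger's argument cannot be nullhomotopic, contradicting $\pi_1(B)=\{1\}$. Second, and more seriously, your justification of the separation statement is false as a general principle: a closed, transversely oriented plane leaf in an orientable $3$-manifold need not separate (in $\mathbb{R}^2\times S^1$ foliated by the planes $\mathbb{R}^2\times\{\theta\}$ no leaf separates). Separation here comes from the hypothesis on $\pi_1(N)$, which is how the paper argues: if $L$ did not separate $N$, van Kampen would exhibit $\pi_1(N)$ as an HNN extension of $\pi_1(N\setminus L)$ over the trivial group $\pi_1(L)$, i.e.\ $\pi_1(N\setminus L)\ast\mathbb{Z}$, and $\mathbb{Z}^2$ admits no such splitting. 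Your auxiliary claim that a leaf of $B$ cannot accumulate on $N_0$ is trivially true because $N_0$ is open and disjoint from $B$, but it says nothing about accumulation on the frontier of $\overline{N_0}$, and the density of leaves in $N_0$ plays no role in it.

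Finally, the headline claim that $N$ itself is homeomorphic to $\mathbb{T}^2\times\mathbb{R}$ is only sketched in your last paragraph, and you acknowledge the gap yourself: you do not control which components $B$ occur, along which frontier leaves they attach, or why the attachment can be ``absorbed'' into the $\mathbb{R}$-factor. This cannot be dismissed as routine, since the fundamental group alone does not determine an open irreducible $3$-manifold up to homeomorphism; some genuinely geometric input about the frontier of $N_0$ and the product structure on the pieces is needed. To be fair, the paper's printed proof also stops after establishing (1), the structure of the components $B$, and the separation property, so your proposal is no less complete than the source on this point; but as a proof of the homeomorphism statement it remains unfinished.
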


 We observe that in the hypothesis of Theorem
 \ref{teo:abeliano}, by Proposition \ref{prop:abeliansubgroup},
 we might have supposed that  $\pi_{1}(N)$ has an Abelian subgroup of rank greater
 than one.

 The proof of Theorem \ref{teo:subgrabelian} is typical of the three dimensional case,
 since we use a result of \cite{gabai} on the existence of incompressible torus in
 irreducible $3$-manifolds.

 This paper is organized as follows. In Section 2 we present some
 examples of foliations by planes in open manifolds having leaves
 not closed. In Section 3, by using two remarkable results obtained by Palmeira (Theorem \ref{teo:Palmeira})
 and Stallings (Lemma \ref{lem:sta}), we prove the Theorem
 \ref{maintheorem}. In Section 4 we prove the Theorem 2 and then by using this theorem we show
 Theorem 3.

 \section{Examples}

 Let us give some examples of open manifolds supporting foliations
 by planes having not closed leaves.

 \begin{example}
 \label{exem:example 2}
   \textnormal{
   Let $X$ be the vector field on the 2-torus $\mathbb{T}^{2}$ which is defined by the
   irrational flow. Fix a point $p$ in $\mathbb{T}^2$ and let $f:\mathbb{T}^2\to \mathbb{R}$
    an application such that $f(p)=0$ and $f(x)\neq 0$ for all $x\in \mathbb{T}^2-\{p\}$.
    Let $\mathcal{F}_{0}$ be the one dimensional foliation in $\mathbb{T}^2-\{p\}$ which is defined by
    the vector field $fX.$ We obtain two foliations:
    \begin{itemize}
      \item[(1)] The foliation by planes $\mathcal{F}_1$ on $N=\mathbb{T}^2\times \mathbb{R}^{n-2}$ whose
      leaves are the product of the orbits of $X$ with $\mathbb{R}^{n-2}$.
      \item[(2)] The foliation by planes  $\mathcal{F}_2$ on $N=(\mathbb{T}^{2}-\{p\})\times \mathbb{R}^{n-2},$
      whose leaves are the product of leaves of $\mathcal{F}_{0}$ with $\mathbb{R}^{n-2}.$
    \end{itemize}
    We can observe that every leaf in the two foliations $\mathcal{F}_1$ and $\mathcal{F}_2$ are dense, in
    particular, are not closed. But, in the first case the fundamental group of $N$ is isomorphic
    to $\mathbb{Z}^2$ and, in the second case the fundamental group of $N$ is free
    not abelian with two generators (because $\pi_1(N)=\pi_1(\mathbb{T}^2-\{p\})$).
    }
   \end{example}

   \begin{figure}[ht!]
 \label{fig:Amalgamation}
 \psfrag{Dt}{\footnotesize $D\times \{t\}$}
 \psfrag{TxI}{\footnotesize$\mathbb{T}^2\times(0,1)$}
       \includegraphics[width=7cm,height=7cm,keepaspectratio]{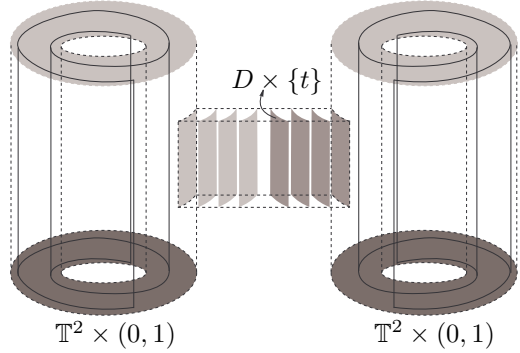}
 \caption{Example 2}
 \end{figure}

\begin{example}
   \textnormal{
   Let $\mathcal{F}_0$ be the foliation on $\mathbb{T}^2 \times [0,1]$ such that
   $T_0=\mathbb{T}^2 \times \{0\}$ and $T_1=\mathbb{T}^2 \times \{1\}$ are compact leaves
   and the restriction $\mathcal{F}_0|_{\mathbb{T}^2 \times (0,1)}$ is
   by planes (in fact, this foliation can be defined by a locally free action of $\mathbb{R}^2$, see
   for example page 2 in \cite{Cha-Rosen}). In this situation, it is well know that, every leaf of
   $\mathcal{F}_0$  in $\mathbb{T}^2 \times (0,1)$ is dense.
   Let $D$ be an open 2-disk in $T_1$ and we consider in $D\times [0,1]$ the
   foliation $\mathcal{F}_d$ whose leaves are $D\times\{t\}$ with $t\in [0,1]$.
   Let $N$ be the open 3-manifolds which is obtained by gluing two copies of
   $[\mathbb{T}^2 \times (0,1)]\cup D$ with $D\times I$ as in Figure \ref{fig:Amalgamation}.
   Let $\mathcal{F}$ be the foliation in $N$ such that
   $\mathcal{F}|_{\mathbb{T}^2 \times (0,1)}=\mathcal{F}_0|_{\mathbb{T}^2 \times (0,1)}$
   and $\mathcal{F}|_{D\times I}=\mathcal{F}_d$.  Then we have the following facts:
   \begin{enumerate}
     \item the fundamental group of $N$ is the free product $\mathbb{Z}^2\ast \mathbb{Z}^2$.
     \item in each copy of $\mathbb{T}^2 \times (0,1)$ we can find two incompressible 2-torus $T_1$ and $T_2$, which are transverse to $\mathcal{F}$, such that the foliations induced by $\mathcal{F}$
         in this torus are topologically equivalent to irrational flow on the 2-torus.
     isotopic to $\mathbb{T}^2 \times \{1/2\}$
     \item for $i=1,2,$ let ${\rm Sat}(T_i)$ be the saturated by $\mathcal{F}$ of $T_i$. Then,
      ${\rm Sat}(T_i)=\mathbb{T}^2\times (0,1)$ and the complement of the closure of ${\rm Sat}(T_1)\cup {\rm
     Sat}(T_2)$ is $D\times(0,1)$ and foliated by closed planes.
   \end{enumerate}
   }
  \end{example}


  \begin{example}
   \textnormal{
   For $k\geq 2,$ let $\mathcal{F}_{0}$ be the $(k-1)$-dimensional foliation by
   planes in the $k$-torus $\mathbb{T}^{k}.$ In $N=\mathbb{T}^{k}\times \mathbb{R}^{n-k},$
   we consider the foliation by planes $\mathcal{F}$ which is the product of
   $\mathcal{F}_{0}$ with $\mathbb{R}^{n-k}.$
   Then:
 \begin{itemize}
    \item \textit{$\pi_{1}(N)=\pi_{1}(\mathbb{T}^{k})=\mathbb{Z}^{k}$ (in particular, Abelian free)}
    \item \textit{All the leaves of $\mathcal{F}$ are denses (in particular, this are
    not closed)}. This, because that every leaf of $\mathcal{F}_{0}$ is dense in $\mathbb{T}^{k}.$
    \item \textit{$N$ is not homeomorphic to the product of a surface by $\mathbb{R}^{n-2}$}
    \item \textit{${\rm rank}(\pi_{1}(N))=k\geq 2$}
 \end{itemize}
   }
  \end{example}

 \section{The fundamental group of an open $n$-manifold foliated by closed planes}
 \subsection{Proof of Theorem \ref{maintheorem}}

 We state the results to be used in the proof of Theorem
 \ref{maintheorem}. We start with the following theorem
 proved by Palmeira in \cite{pal}.

 \begin{theorem}
 \label{teo:Palmeira}
 If $N$ is an orientable open $n$-manifold, $n\geq 3,$ which has
 a finitely generated fundamental group and with a transversely
 orientable $C^2$ foliation $\mathcal{F}$ by closed planes, then
 there exists an orientable surface $\Sigma$ and an orientable
 one-dimensional foliation $\mathcal{F}_0$ of $\Sigma$ such that
 $(N,\mathcal{F})$ is conjugated by a diffeomorphism to
 $(\Sigma \times \mathbb{R}^{n-2},\mathcal{F}_0 \times \mathbb{R}^{n-2}).$
 When $N$ is simply connected it is not necessary to assume either
 that $\mathcal{F}$ is transversely orientable or that
 the leaves are closed, and moreover $\Sigma=\mathbb{R}^2$ in this
 case.
 \end{theorem}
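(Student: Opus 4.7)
The proof naturally splits in two: first the simply connected case, where the conclusion is strongest and neither closedness nor transverse orientability of the leaves is needed; then the general case is obtained from this by lifting to the universal cover $\widetilde{N}$ and analysing the deck-group action.

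\textbf{Step 1: Simply connected case.} Let $V$ be simply connected, foliated by (not necessarily closed) $(n-1)$-planes. Each leaf $L$ is simply connected, so its holonomy group is trivial, and Reeb's local stability provides a saturated tubular neighbourhood $L\times(-\varepsilon_L,\varepsilon_L)$ of every $L$. The plan is to construct a complementary $C^{2}$ foliation $\mathcal{G}$ on $V$ of dimension $n-2$, whose leaves are $(n-2)$-planes and which satisfies $T\mathcal{F}\oplus T\mathcal{G}=TV$. Fix $p\in V$ and an $(n-2)$-dimensional affine subspace $D_{0}\subset L_{p}$ through $p$; transversally translate $D_{0}$ to neighbouring leaves via the product tubes; then use the triviality of leaf holonomy together with simple connectedness of $V$ to guarantee that this translation is path-independent, so the local pieces glue into a global foliation $\mathcal{G}$. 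Integrating the commuting pair $(\mathcal{F},\mathcal{G})$ yields a diffeomorphism $V\cong\Sigma\times\mathbb{R}^{n-2}$, where $\Sigma$ is the leaf space of $\mathcal{G}$ — a simply connected open surface, hence $\Sigma\cong\mathbb{R}^{2}$ — and $\mathcal{F}$ corresponds to $\mathcal{F}_{0}\times\mathbb{R}^{n-2}$ for $\mathcal{F}_{0}$ the induced $1$-dimensional foliation on $\Sigma$.

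\textbf{Step 2: General case.} Lift $\mathcal{F}$ to $\widetilde{\mathcal{F}}$ on the universal cover $\widetilde{N}$. Transverse orientability of $\mathcal{F}$ forbids a lifted leaf from being a non-trivial cover of its projection, so each lifted leaf is again a plane; and closedness of the original leaves, together with properness of the covering projection, makes the lifted leaves closed in $\widetilde{N}$. Applying Step 1 gives $(\widetilde{N},\widetilde{\mathcal{F}})\cong(\mathbb{R}^{2}\times\mathbb{R}^{n-2},\widetilde{\mathcal{F}}_{0}\times\mathbb{R}^{n-2})$ together with a complementary foliation $\widetilde{\mathcal{G}}$. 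Choose $\widetilde{\mathcal{G}}$ to come from a lift of equivariant data on $N$ (for instance, a $C^{2}$ distribution of $(n-2)$-planes in $T\mathcal{F}$ chosen on $N$ and then lifted), so that $\widetilde{\mathcal{G}}$ is automatically $\pi_{1}(N)$-invariant; this forces the deck action to respect both factors of the splitting. A final rigidity step — using contractibility of the $\mathbb{R}^{n-2}$ fibres and the fact that $\widetilde{\mathcal{G}}/\pi_{1}(N)$ must again be a foliation of $N$ by $(n-2)$-planes — allows one to trivialise the action on the $\mathbb{R}^{n-2}$ factor by an equivariant reparametrisation. Passing to the quotient delivers $(N,\mathcal{F})\cong(\Sigma\times\mathbb{R}^{n-2},\mathcal{F}_{0}\times\mathbb{R}^{n-2})$ with $\Sigma=\mathbb{R}^{2}/\pi_{1}(N)$, which is orientable because $N$ is.

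\textbf{Main obstacle.} The principal technical difficulty is Step 1: the global construction of the complementary foliation $\mathcal{G}$ is only locally obvious (from the product tubes around each leaf), and the global gluing genuinely requires the simple-connectedness hypothesis to kill the monodromy of parallel transport of the transverse $(n-2)$-disks. The subsequent obstacle, in Step 2, is showing that the $\pi_{1}(N)$-action on the $\mathbb{R}^{n-2}$ factor can be trivialised — equivalently, that $N$ is a genuine product rather than a non-trivial $\mathbb{R}^{n-2}$-bundle over $\Sigma$ — a point that leans on the closedness of the leaves downstairs and on the contractibility of the fibres.
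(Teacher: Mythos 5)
This statement is Palmeira's theorem, which the paper quotes from \cite{pal} as a black box and does not prove; so there is no internal proof to compare against, and the only question is whether your sketch would stand on its own. It would not. The entire content of the theorem is concentrated in your Step 1, and the step that carries all the weight there --- the global construction of a complementary $(n-2)$-dimensional foliation $\mathcal{G}$ tangent to the leaves by ``parallel transport'' of a transverse disk $D_0$ through the product tubes --- is precisely what fails. The obstruction is not monodromy killed by $\pi_1(V)=1$: it is the branching of the leaf space. A simply connected open manifold foliated by closed planes generally has a non-Hausdorff, simply connected one-dimensional leaf space (already for Reeb-type foliations of $\mathbb{R}^2$ crossed with $\mathbb{R}$), and near a pair of non-separated leaves your transported disks have no reason to fit together into a foliation, nor is the quotient ``$\Sigma$ = leaf space of $\mathcal{G}$'' a surface in any obvious sense. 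Palmeira's actual argument runs in the opposite direction: he shows the foliation is determined up to conjugacy by its leaf space (a simply connected non-Hausdorff $1$-manifold), realizes that leaf space by a foliation of an open surface, and then builds the conjugacy; he must also \emph{prove}, not assume, that in the simply connected case the leaves are automatically closed --- a point your sketch skips entirely by invoking Reeb stability, which only gives local product neighbourhoods.

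Step 2 has independent gaps. A $C^2$ field of $(n-2)$-planes inside $T\mathcal{F}$ chosen on $N$ is a distribution, not a foliation, and you give no reason it is integrable; the assertion that transverse orientability prevents lifted leaves from being nontrivial covers is beside the point (leaves are simply connected, so their lifts are always diffeomorphic copies); and the ``final rigidity step'' trivializing the deck action on the $\mathbb{R}^{n-2}$ factor is exactly the kind of statement that needs a proof rather than a name. Most tellingly, the hypothesis that $\pi_1(N)$ is finitely generated --- which is essential in Palmeira's proof, where it controls an exhaustion of the leaf space --- is never used anywhere in your argument. A proof of a theorem that never invokes one of its substantive hypotheses should be treated as a warning sign. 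For the purposes of this paper the correct move is the one the authors make: cite \cite{pal} and use the statement as given.
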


 \begin{corollary}
 $N$ have the same homotopy type of an open surface.
 \end{corollary}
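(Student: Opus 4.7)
The plan is to invoke Theorem \ref{teo:Palmeira} directly and collapse the $\mathbb{R}^{n-2}$-factor. Palmeira's theorem identifies $(N,\mathcal{F})$, up to diffeomorphism, with the product foliation $(\Sigma\times\mathbb{R}^{n-2},\mathcal{F}_0\times\mathbb{R}^{n-2})$ for some orientable surface $\Sigma$. The first-factor projection $p\colon\Sigma\times\mathbb{R}^{n-2}\to\Sigma$ admits the inclusion $\iota\colon x\mapsto(x,0)$ as a homotopy inverse: $p\circ\iota=\mathrm{id}_\Sigma$, while the straight-line homotopy $H_s(x,t)=(x,(1-s)t)$ deformation retracts $\Sigma\times\mathbb{R}^{n-2}$ onto $\Sigma\times\{0\}\cong\Sigma$. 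Hence $N\simeq\Sigma\times\mathbb{R}^{n-2}\simeq\Sigma$.

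To conclude that $\Sigma$ is genuinely an \emph{open} surface (noncompact), I would argue as follows. Under the product diffeomorphism, every closed plane leaf of $\mathcal{F}$ corresponds to a set of the form $L_0\times\mathbb{R}^{n-2}$, where $L_0$ is a leaf of $\mathcal{F}_0$; since such a set is diffeomorphic to $\mathbb{R}^{n-1}$ and closed in $\Sigma\times\mathbb{R}^{n-2}$, the leaf $L_0$ is itself diffeomorphic to $\mathbb{R}$ and closed in $\Sigma$. A closed subset of a compact space is compact, so $\Sigma$ cannot be compact. Therefore $\Sigma$ is an open surface, and $N$ has the homotopy type of $\Sigma$.

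There is no serious obstacle here: all the substantive work is encapsulated in Palmeira's theorem, and what remains is routine bookkeeping. The only point that requires any thought at all is the final observation that the presence of a closed plane leaf forces the base surface $\Sigma$ to be noncompact, and that follows from a one-line properness argument as above.
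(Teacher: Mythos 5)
Your argument is correct and is essentially the paper's (the corollary is left as an immediate consequence of Theorem \ref{teo:Palmeira}): the product structure $\Sigma\times\mathbb{R}^{n-2}$ deformation retracts onto $\Sigma$. Your extra observation that a closed plane leaf forces $\Sigma$ to be noncompact is a legitimate and welcome completion, since the theorem as stated only produces an orientable surface.
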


 Theorem \ref{teo:Palmeira} and the following result, which was conjectured by Serre
 \cite{serre} and proved by Stallings \cite{sta}, are crucial to
 prove Theorem \ref{maintheorem}.

 \begin{lemma}[\cite{sta}, {Stallings}]
 \label{lem:sta}
 Let $G$ be a torsion-free finitely generated group. If $H$ is a free subgroup
 of finite index, then $G$ is also free.
 \end{lemma}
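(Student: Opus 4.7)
The plan is to reduce the statement to two classical results: a theorem of Serre on cohomological dimension of finite-index extensions, together with the Stallings--Swan characterization of free groups as precisely those of cohomological dimension at most one.

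For the first step, I would observe that since $H$ is free, it is the fundamental group of a graph whose universal cover is a tree. A tree is a contractible one-dimensional $CW$ complex, so $H$ admits a one-dimensional $K(H,1)$, which yields $\mathrm{cd}(H)\leq 1$. To transfer this bound to $G$, I would invoke Serre's theorem: if a torsion-free group $G$ contains a subgroup $H$ of finite index with $\mathrm{cd}(H)<\infty$, then $\mathrm{cd}(G)=\mathrm{cd}(H)$. The torsion-freeness of $G$ is essential in this step, as any element of finite order in $G$ would force $\mathrm{cd}(G)=\infty$. Combining these facts gives $\mathrm{cd}(G)\leq 1$.

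The second step is then immediate: by the Stallings--Swan theorem, any group of cohomological dimension at most one is free, so $G$ is free. This finishes the reduction.

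The main obstacle is of course the Stallings--Swan theorem itself, which contains the deep geometric content of the statement. Its proof proceeds via Stallings' theorem on ends of groups: a finitely generated group with more than one end splits non-trivially as an amalgamated free product or HNN extension over a finite subgroup. Iteratively applying this to $G$, with a complexity argument to guarantee termination, yields an action of $G$ on a simplicial tree whose edge and vertex stabilizers are finite; torsion-freeness then forces all stabilizers to be trivial, so $G$ acts freely on a tree and is free by Bass--Serre theory. Termination is delicate in general and invokes Dunwoody's accessibility theorem, but in the present virtually-free setting one can instead induct on the rank of $H$, since each splitting strictly decreases the rank of a free finite-index subgroup (using the Nielsen--Schreier and Kurosh subgroup theorems to keep track of finite-index free subgroups of the vertex groups of the splitting).
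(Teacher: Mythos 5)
The paper does not prove this lemma at all: it is imported verbatim from Stallings' paper \cite{sta} (the resolution of Serre's conjecture in the finitely generated case) and used as a black box, so there is no in-paper argument to compare yours against. Your proposal is a correct outline of the standard proof. The reduction is sound: $H$ free gives $\mathrm{cd}(H)\leq 1$ via a one-dimensional $K(H,1)$; Serre's theorem on finite-index subgroups of torsion-free groups of finite cohomological dimension gives $\mathrm{cd}(G)=\mathrm{cd}(H)\leq 1$ (and you correctly flag that torsion-freeness is where this would otherwise fail, since a nontrivial finite subgroup forces infinite cohomological dimension); and the Stallings--Swan theorem then gives that $G$ is free. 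Your sketch of Stallings--Swan via the ends machinery is also essentially right, including the observation that in the virtually-free, torsion-free setting all splittings are over the trivial group (so HNN extensions degenerate to free products with a $\mathbb{Z}$ factor) and that termination can be handled by induction on the rank of a finite-index free subgroup via Kurosh and Nielsen--Schreier, rather than by appealing to accessibility in full generality. The only remark worth making is historical: Stallings' original 1968 argument for the finitely generated case does not need Serre's later cohomological-dimension theorem, since the number of ends is already a commensurability invariant, so one can pass directly from ``$H$ free nontrivial of finite index'' to ``$G$ has $2$ or infinitely many ends'' and apply the structure theorem; your cohomological packaging is the cleaner modern route but relies on one more external input. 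Either way, as a proof sketch of a deep cited theorem, your account is accurate and appropriately honest about where the hard content (the splitting theorem for groups with more than one end) lives.
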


  \begin{proposition}
 \label{teo:torsiofree}
 If $N$ is an open connected $n$-manifold foliated by planes, then $\pi_{1}(N)$
 is torsion-free. In particular, if $\pi_{1}(N)$
 is finite, then $N$ is difeomorphic to $\mathbb{R}^{n}.$
 \end{proposition}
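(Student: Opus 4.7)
The plan is to pass to the universal cover and apply the simply-connected case of Palmeira's theorem. Let $\pi\colon \tilde N \to N$ be the universal covering and lift $\mathcal{F}$ to a foliation $\tilde{\mathcal F}$ on $\tilde N$, whose leaves are the connected components of the preimages of leaves of $\mathcal{F}$. Each such component is a covering of a leaf $L\cong \mathbb{R}^{n-1}$; since $\mathbb{R}^{n-1}$ is simply connected, every component is diffeomorphic to $\mathbb{R}^{n-1}$, so $\tilde{\mathcal F}$ is again a foliation by planes. Because $\tilde N$ is simply connected, the final clause of Theorem \ref{teo:Palmeira} (which does not require transverse orientability or closedness of leaves) applies and gives a diffeomorphism of foliated manifolds between $(\tilde N,\tilde{\mathcal F})$ and $(\mathbb{R}^2\times\mathbb{R}^{n-2},\mathcal{F}_0\times \mathbb{R}^{n-2})$. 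In particular, $\tilde N$ is diffeomorphic to $\mathbb{R}^n$.

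Next, suppose for contradiction that some $g\in\pi_1(N)$ has finite order $k>1$. Then the cyclic subgroup $H=\langle g\rangle$ acts freely and properly discontinuously on $\tilde N\cong\mathbb{R}^n$ by deck transformations. I would invoke the classical fact that a non-trivial finite group cannot act freely on $\mathbb{R}^n$: picking a prime $p\mid k$ and a subgroup of order $p$, Smith theory forces a fixed point (since $\mathbb{R}^n$ is $\mathbb{Z}/p$-acyclic its fixed-point set under a $\mathbb{Z}/p$-action is $\mathbb{Z}/p$-acyclic, hence non-empty), contradicting freeness. Equivalently, $\mathbb{R}^n/H$ would be an aspherical $n$-manifold with fundamental group $H$, contradicting the infinite cohomological dimension of any non-trivial finite group. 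Either way, $\pi_1(N)$ is torsion-free.

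For the second assertion, if $\pi_1(N)$ is finite then, being torsion-free, it must be trivial, so $N=\tilde N$ is diffeomorphic to $\mathbb{R}^n$. The main obstacle here is the ``no free finite action on $\mathbb{R}^n$'' step: Palmeira's theorem and Stallings' lemma quoted in the paper do not supply it directly, so one must call on Smith theory (or the cohomological dimension obstruction) from outside the toolkit used in the rest of the section.
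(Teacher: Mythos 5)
Your proposal is correct, and its first half is exactly the paper's reduction: lift $\mathcal{F}$ to the universal cover, note the lifted foliation is again by planes, and invoke the simply connected case of Theorem \ref{teo:Palmeira} to get $\widetilde{N}\cong\mathbb{R}^n$; the second assertion (finite $\pi_1$ forces $N\cong\mathbb{R}^n$) is then handled as in the paper. Where you diverge is only in the final obstruction to torsion. The paper takes the intermediate covering $\widehat{N}\to N$ with $\pi_1(\widehat{N})=H$ a finite subgroup of order $k$ (this is precisely your quotient $\mathbb{R}^n/H$) and uses multiplicativity of the Euler characteristic for the $k$-fold covering $\widetilde{N}\to\widehat{N}$: $1=\chi(\mathbb{R}^n)=k\,\chi(\widehat{N})$ with $\chi(\widehat{N})\in\mathbb{Z}$, forcing $k=1$. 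You instead rule out a free $\mathbb{Z}/p$-action on $\mathbb{R}^n$ by Smith theory, or equivalently observe that $\mathbb{R}^n/H$ would be a finite-dimensional aspherical manifold with nontrivial finite fundamental group, impossible since such a group has cohomology in arbitrarily high degrees. All three arguments exploit the contractibility of $\widetilde{N}$ and are valid; your worry that the needed fact lies ``outside the toolkit'' is fair but avoidable, since the deck action is automatically free and properly discontinuous, so the elementary Euler characteristic (or cohomological dimension) count on the quotient suffices and no Smith theory is required -- which is exactly the economy the paper's proof achieves.
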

 \begin{proof}
 To prove that $\pi_1(N)$ is torsion-free, it is enough to show
 that its only finite subgroup is the trivial one.
 Let $\mathcal{F}$ denote the foliation of $N$ by planes and let
 $H$ be a finite subgroup of $\pi_1(N)$ com $k$ elements.
 Let $\widetilde{N}\to N$ be the universal covering map, and let
 $\widehat{N}\to N$ be the covering map associated to $H$,
 that is, $\pi_1(\widehat{N})=H.$ Let $\widetilde{\mathcal{F}}$
 and $\widehat{\mathcal{F}}$ be the foliations of $\widetilde{N}$
 and $\widehat{N}$ induced by $\mathcal{F}$, both are foliations
 by planes. It follows from the last part of Theorem \ref{teo:Palmeira}
 that $\widetilde{N}=\mathbb{R}^n$. Then the Euler
 characteristics of $\widetilde{N}$ and $N$ satisfy
 $$
 1=\chi(\mathbb{R}^{n})=\chi(\widetilde{N})=k\chi(\widehat{N}), \ \, \chi(\widehat{N})\in\mathbb{Z}.
 $$
 Hence $k=1$ and $H$ is trivial.

 Finally, if $\pi_1(N)$ is finite we obtain that $\pi_1(N)$ is
 trivial, consequently, by the last part of Theorem \ref{teo:Palmeira}
 we have that $N$ is diffeomorphic to $\mathbb{R}^n.$
 \end{proof}

 \begin{proof}[Proof of Theorem \ref{maintheorem}]
  We have the following possibilities

 \begin{itemize}
    \item[(a)] \textit{$N$ is orientable and $\mathcal{F}$ transversally orientable}:
     by Palmeira´s theorem, $\pi_{1}(N)$ is free.
    \item[(b)] \textit{$N$ is orientable and $\mathcal{F}$ is not
    transversally orientable}: we consider $\pi:\widetilde{N}\to N$
    the double covering map that turn $\widetilde{\mathcal{F}}$, the
    lifting to $\widetilde{N}$ of $\mathcal{F}$, transversally
    orientable. Then, by item (a) and Proposition \ref{teo:torsiofree}, we have
    respectively that $\pi_{1}(\widetilde{N})$ is free
    and torsion-free. Finally, since the index of $\pi_{\ast}(\pi_{1}(\widetilde{N}))$
    in $\pi_{1}(N)$ is equal to two, it follows from Lemma
    \ref{lem:sta} that $\pi_{1}(N)$ is free.
    \item[(c)] \textit{$N$ non-orientable}: let $\pi:\widetilde{N}\to N$
    be the double covering map that turn $\widetilde{N}$ orientable. By
    applying the items (a) and (b) to $\widetilde{N}$ and $\widetilde{\mathcal{F}}$, the
    lifting to $\widetilde{N}$ of $\mathcal{F}$, we obtain that $\pi_{1}(\widetilde{N})$
    is free. As above, since the index of $\pi_{\ast}(\pi_{1}(\widetilde{N}))$
    in $\pi_{1}(N)$ is equal to two, it follows from Lemma
    \ref{lem:sta} that $\pi_{1}(N)$ is free.
 \end{itemize}
 \end{proof}

 Let $X$ be a finite set and denote by $\# X$ the cardinality
 of $X$. Let $G$ be a torsion-free finitely generated group. The \textit{rank of}
 $G$ is given by
 $$
 {\rm rank}(G)=\min\{k\in \mathbb{N}; k=\#X\ \textrm{where} \ G\ \textrm{is generated by}\ X\}
 $$

 The following result is equivalent to Corollary \ref{cor:one}.
 \begin{theorem}
 \label{teo:rank}
 Let $\mathcal{F}$ be a foliation by planes of an open connected
 $n$-manifold $N.$ If every leaf of $\mathcal{F}$ is closed, then each
 finitely generate Abelian subgroup $H$ of $\pi_{1}(N)$ has
 rank at most one.
 \end{theorem}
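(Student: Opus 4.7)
The plan is to deduce this from Theorem \ref{maintheorem} by passing to an appropriate covering space whose fundamental group is exactly $H$. Concretely, let $H\subset\pi_1(N)$ be a finitely generated abelian subgroup, and let $\pi\colon\widehat{N}\to N$ be the connected covering corresponding to $H$, so that $\pi_*\colon\pi_1(\widehat{N})\xrightarrow{\ \cong\ } H$. Then $\widehat{N}$ is automatically an open connected $n$-manifold, and the pull-back $\widehat{\mathcal{F}}:=\pi^{-1}(\mathcal{F})$ is a $C^2$ foliation on $\widehat{N}$.

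The heart of the argument is to check that $\widehat{\mathcal{F}}$ is again a foliation by \emph{closed} planes. Given a leaf $\widehat{L}$ of $\widehat{\mathcal{F}}$, let $L=\pi(\widehat{L})$; using that plaques of $\widehat{\mathcal{F}}$ are evenly-covered lifts of plaques of $\mathcal{F}$, the restriction $\pi|_{\widehat{L}}\colon\widehat{L}\to L$ is a covering map. Since $L\cong\mathbb{R}^{n-1}$ is simply connected, this cover is trivial and $\widehat{L}\cong L\cong\mathbb{R}^{n-1}$, so every leaf of $\widehat{\mathcal{F}}$ is a plane. For closedness, note that $\pi^{-1}(L)$ is closed in $\widehat{N}$ because $L$ is closed in $N$; since $\pi^{-1}(L)$ is a (topological) manifold, hence locally connected, its connected components are clopen in $\pi^{-1}(L)$ and therefore closed in $\widehat{N}$. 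As $\widehat{L}$ is precisely such a component, $\widehat{L}$ is closed.

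Now Theorem \ref{maintheorem} applies to $(\widehat{N},\widehat{\mathcal{F}})$: since $\pi_1(\widehat{N})\cong H$ is finitely generated, it is free. A free group that is also abelian has rank at most one, because any free group on two or more generators is non-abelian. Hence $H$ has rank at most one, which is exactly the statement.

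The main obstacle is the verification in the second paragraph that lifting preserves the property of being a foliation by closed planes; everything else is a direct application of Theorem \ref{maintheorem} together with the elementary fact that free abelian forces rank $\leq 1$. Note also that torsion-freeness of $\pi_1(N)$, needed implicitly so that ``rank'' of $H$ is well defined in the usual sense, is already guaranteed by Proposition \ref{teo:torsiofree}.
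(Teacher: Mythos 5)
Your argument is correct, and it rests on the same basic device as the paper's proof: pass to the covering $\pi\colon\widehat{N}\to N$ with $\pi_{*}\pi_{1}(\widehat{N})=H$ and use the lifted foliation. The difference is in the final step. The paper applies Palmeira's theorem (Theorem \ref{teo:Palmeira}) directly to the cover, obtaining $\widehat{N}\cong\Sigma^{2}\times\mathbb{R}^{n-2}$ and then observing that an open surface with abelian fundamental group is a plane or a cylinder; you instead quote Theorem \ref{maintheorem} (already proved at this point in the paper) and finish with the purely group-theoretic remark that a free group which is abelian has rank at most one. Your route buys two small things: Theorem \ref{maintheorem} carries no orientability or transverse orientability hypotheses (these are absorbed there by the double-cover argument), so you do not have to check that the lifted data satisfy Palmeira's hypotheses; and you make explicit the verification, left implicit in the paper, that the lifted foliation $\widehat{\mathcal{F}}$ is again by \emph{closed} planes --- each leaf covers a simply connected leaf, hence is a plane, and each is a connected component of the closed, locally connected set $\pi^{-1}(L)$, hence closed (your unproved assertion that a leaf is exactly such a component is the standard fact that a connected integral manifold lies in a single leaf, so this is fine). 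The paper's route, in exchange, yields slightly more structural information about the cover (a cylinder or a plane times $\mathbb{R}^{n-2}$), but for the statement at hand both arguments are equally conclusive.
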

 \begin{proof}
 Let $p:\widetilde{N}\to N$ be the covering such that
 $p_{\ast}(\pi_{1}(\widetilde{N}))=H$. Then by Theorem
 \ref{teo:Palmeira}, $\widetilde{N}$ is diffeomorphic to $\Sigma^{2}\times \mathbb{R}^{n-2},$
  where $\Sigma^{2}$
 is an open surface. Consequently, since $H= p_{\ast}\pi_{1}(\widetilde{N})=p_{\ast}\pi_{1}(\Sigma^{2})$
 is an Abelian group, we have that $\Sigma^{2}$ is either an open cylinder or a plane.
 This completes the proof.
 \end{proof}

 \begin{corollary}
 If each leaf is closed and $\pi_{1}(N)$ is not Abelian,
 then $C=Z(\pi_{1}(N)),$ the center of $\pi_{1}(N),$ is
 trivial.
 \end{corollary}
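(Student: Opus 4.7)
The plan is to reduce the corollary to a purely group-theoretic statement: the center of a non-Abelian free group is trivial. Under the finite generation hypothesis standing throughout this section, the assumption ``every leaf is closed'' places us directly in the setting of Theorem~\ref{maintheorem}, which supplies the crucial bridge that $\pi_{1}(N)$ is a free group. Combined with the hypothesis that $\pi_{1}(N)$ is not Abelian, this forces $\pi_{1}(N)$ to be free of rank at least two.

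Concretely, I would proceed in two short steps. First, cite Theorem~\ref{maintheorem} to conclude that $\pi_{1}(N)$ is free. Second, recall the classical fact that in a free group $F$ the centralizer of any non-identity element is cyclic, since two elements of a free group commute only if they are powers of a common root. Consequently, if some $a \in Z(\pi_{1}(N))$ were non-trivial, the whole group $\pi_{1}(N) = C_{\pi_{1}(N)}(a)$ would be cyclic, contradicting the non-Abelian hypothesis. Therefore $Z(\pi_{1}(N))$ is trivial.

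No genuine obstacle arises, as the real work is already contained in Theorem~\ref{maintheorem}. The only subtle point worth noting is that the corollary as stated does not explicitly repeat the finite generation hypothesis, whereas Theorem~\ref{teo:rank} above does not need it. To cover this edge case one can give a self-contained argument using only Theorem~\ref{teo:Palmeira}: given a hypothetical non-trivial $a \in Z(\pi_{1}(N))$ and a non-commuting pair $b, c \in \pi_{1}(N)$, apply Theorem~\ref{teo:Palmeira} to the covering $p \colon \widetilde{N} \to N$ with $p_{\ast}(\pi_{1}(\widetilde{N})) = \langle a, b, c\rangle$. This realizes the finitely generated subgroup $\langle a, b, c\rangle$ as the fundamental group of an open surface, hence as a free group that contains $a$ in its center; such a free group must be cyclic, contradicting $bc \neq cb$.
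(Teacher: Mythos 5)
Your argument is correct, but it takes a genuinely different route from the paper's. The paper never invokes freeness of $\pi_{1}(N)$: it argues by contradiction inside the framework of Theorem \ref{teo:rank}, taking $\alpha\in\pi_{1}(N)\setminus C$, forming the Abelian subgroup $H$ generated by $\{\alpha\}\cup C$, and using torsion-freeness (Proposition \ref{teo:torsiofree}) to assert ${\rm rank}(H)>1$, contradicting Theorem \ref{teo:rank}. You instead pass through Theorem \ref{maintheorem} ($\pi_{1}(N)$ free when finitely generated) and the classical fact that centralizers of nontrivial elements of a free group are cyclic, so a non-Abelian free group has trivial center. Your remark about finite generation is well taken: the corollary does not restate that hypothesis, and your fallback---applying Theorem \ref{teo:Palmeira} to the cover associated with $\langle a,b,c\rangle$, where $a\in C\setminus\{1\}$ and $bc\neq cb$, so that this finitely generated subgroup is free with nontrivial central element, hence cyclic, a contradiction---is in the same spirit as the paper's proof of Theorem \ref{teo:rank}, and is arguably tighter than the paper's own argument, since the step ``${\rm rank}(H)>1$'' does not follow purely from $\alpha\notin C$ (in the torsion-free Klein bottle group $\langle x,y\mid xyx^{-1}=y^{-1}\rangle$ one has $C=\langle x^{2}\rangle$ and $\alpha=x\notin C$, yet $\langle\{\alpha\}\cup C\rangle=\langle x\rangle$ is cyclic). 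Two small glosses you share with the paper: Theorem \ref{teo:Palmeira} is stated for orientable $N$ and transversely orientable foliations, so strictly one should pass to double covers and use Lemma \ref{lem:sta} as in the proof of Theorem \ref{maintheorem}; and one should note that the lifted leaves in the cover are closed, which holds because each leaf of the lifted foliation is a connected component of the closed preimage of a closed leaf of $\mathcal{F}$.
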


 \begin{proof}
 Suppose by contradiction that $C\neq\{1\}.$ Since $\pi_{1}(N)$ is not Abelian,
 it follows that $C\subsetneqq \pi_{1}(N).$
 Let $\alpha \in \pi_{1}(N)\setminus C$ and $H$ the Abelian subgroup of
 $\pi_{1}(N)$ which is generated by $\{\alpha\}\cup C.$ Then,
 by Theorem \ref{teo:torsiofree}, ${\rm rank}(H)>1$ contradicting
 Theorem \ref{teo:rank}.
 \end{proof}

 \begin{remark}
 Note that, if $\pi_{1}(N)$ has an Abelian subgroup of rank greater to one,
 or $C\neq \{1\},$ then $N$ cannot be foliated by closed
 planes.
 \end{remark}

 \vspace{.5cm}
 \section{Classifying open 3-manifolds foliated by planes whose fundamental group
  is Abelian of rank at least two}

 In this section we prove the Theorems \ref{teo:subgrabelian} and \ref{teo:abeliano}.
 We state  some results in $3$-manifolds topology and foliations by planes
 which will be used in the proof of the Theorems \ref{teo:subgrabelian}
 and \ref{teo:abeliano}. We begin with some terminology. An three
 manifold $N$ is called \textit{irreducible} if every embedded two
 sphere in $N$ bounds a three ball in $N.$ A two sided surface $S$
 in $N$ is \textit{incompressible} if the map $\pi_1(S)\to \pi_1(N)$
 induced by the inclusion is injective.
 The following results are well-know tools in three manifold topology.

 \begin{theorem}{\cite[Theorem 6]{rosenberg}}
 \label{teo:rosenberg}
 Let $N$ be an orientable $3$-manifold, not necessarily compact, and $\mathcal{F}$
 be a foliation of $N$ by planes. Then  $N$ is irreducible.
 \end{theorem}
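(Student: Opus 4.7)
The plan is to reduce the assertion to Alexander's theorem on $\mathbb{R}^{3}$ by lifting to the universal cover and applying the simply connected case of Palmeira's theorem. Let $p\colon \widetilde{N}\to N$ be the universal covering and $\widetilde{\mathcal{F}}$ the lifted foliation. Each leaf of $\mathcal{F}$ is diffeomorphic to $\mathbb{R}^{2}$, hence simply connected, so every connected component of the preimage of a leaf is again diffeomorphic to $\mathbb{R}^{2}$. Thus $\widetilde{\mathcal{F}}$ is a $C^{2}$ foliation by planes on a simply connected orientable $3$-manifold. The simply connected part of Theorem~\ref{teo:Palmeira} then gives $\widetilde{N}\cong \mathbb{R}^{2}\times\mathbb{R}=\mathbb{R}^{3}$, with no need to assume transverse orientability or closedness of the leaves.

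Now let $S\subset N$ be an embedded $2$-sphere. Since $\pi_{1}(S)=1$, each connected component of $p^{-1}(S)$ is an embedded $2$-sphere in $\widetilde{N}=\mathbb{R}^{3}$, and by Alexander's theorem every such lift $\widetilde{S}$ bounds a $3$-ball $\widetilde{B}\subset\mathbb{R}^{3}$. Distinct lifts of $S$ are pairwise disjoint, hence the associated balls are pairwise nested or disjoint: if $\widetilde{S}_{1},\widetilde{S}_{2}$ are disjoint $2$-spheres in $\mathbb{R}^{3}$, then one of them lies in the bounded complementary component of the other (giving nesting) or each lies in the unbounded complementary component of the other (giving disjoint bounded regions). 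Because the action of $\pi_{1}(N)$ on $\widetilde{N}$ is free and properly discontinuous and each $\widetilde{B}$ is compact, only finitely many lifts of $S$ can be contained in any fixed bounded ball. Therefore we may choose a lift $\widetilde{S}_{0}$ whose bounding ball $\widetilde{B}_{0}$ is minimal in the sense that no other lift of $S$ is contained in its interior.

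To conclude, take any nontrivial deck transformation $g$. The ball $g\widetilde{B}_{0}$ is either contained in $\widetilde{B}_{0}$, contains $\widetilde{B}_{0}$, or is disjoint from it; the first case is excluded by the minimality of $\widetilde{B}_{0}$, and the second is excluded by applying the same minimality to $g^{-1}\widetilde{B}_{0}$. Hence the translates $\{g\widetilde{B}_{0}:g\in\pi_{1}(N)\}$ are pairwise disjoint, so $p|_{\widetilde{B}_{0}}$ is injective and thus a homeomorphism onto its image, which is a $3$-ball in $N$ bounded by $S$. This proves that every embedded $2$-sphere in $N$ bounds a $3$-ball, i.e., $N$ is irreducible. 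The main technical step is the descent argument in the last paragraph: producing a lift whose bounded ball in $\mathbb{R}^{3}$ contains no other lift, and verifying that it then projects homeomorphically; everything else is essentially a direct appeal to Theorem~\ref{teo:Palmeira} and classical planar topology in $\mathbb{R}^{3}$.
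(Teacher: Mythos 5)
The paper does not prove this statement at all: it is quoted from Rosenberg (\cite[Theorem 6]{rosenberg}), whose own argument is direct, by putting the sphere in general position with respect to $\mathcal{F}$, analysing the induced singular foliation on $S^2$, and using Haefliger--Novikov vanishing-cycle arguments together with the simple connectivity of the leaves. Your derivation is therefore a genuinely different route, and as written it is essentially correct: the lifted foliation on the universal cover is again by planes, the simply connected case of Theorem \ref{teo:Palmeira} gives $\widetilde{N}\cong\mathbb{R}^3$, and the innermost-ball descent (Alexander's theorem, disjointness of the lifts of $S$, proper discontinuity to get finiteness, minimality to get a ball disjoint from all its translates) does produce an embedded ball in $N$ bounded by $S$. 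Three points should be made explicit: (i) Alexander's theorem requires $S$ smooth or locally flat, the standing convention for irreducibility in this setting; (ii) you use tacitly that a nontrivial deck transformation $g$ cannot preserve the lift $\widetilde{S}_0$ (otherwise the trichotomy for $g\widetilde{B}_0$ versus $\widetilde{B}_0$ need not hold) --- this follows because $p|_{\widetilde{S}_0}$ is injective and the action is free; (iii) Theorem \ref{teo:Palmeira} is stated for \emph{open} manifolds, so since the statement allows $N$ compact you must first rule out that $\widetilde{N}$ is compact, i.e.\ that $\pi_1(N)$ is finite, which requires Novikov's theorem (a Reeb component would give a torus leaf) or restricting to the open case, which is all the paper actually uses. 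The more serious caveat concerns logical economy rather than correctness: Palmeira's simply connected three-dimensional case is itself built on Rosenberg's analysis of foliations by planes, so as a proof ``from scratch'' your argument inverts the order of dependence in the literature, although it is perfectly consistent inside this paper, where Theorem \ref{teo:Palmeira} is a black box. Rosenberg's approach buys independence from Palmeira's much stronger structure theorem; yours buys brevity, reducing irreducibility to classical covering-space and planar topology of $\mathbb{R}^3$.
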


 Gabai, in \cite[Proof of Corollary 8.6]{gabai}, proved the
 following result.

%

 \begin{theorem}
 \label{teo:gabaiNaberto}
 If $N$ is an open orientable irreducible $3$-manifold and $\mathbb{Z}\times\mathbb{Z}$
 is a subgroup of $\pi_1(N)$, then $N$ contains an incompressible torus.
 \end{theorem}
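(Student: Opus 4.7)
The plan is to pass to the covering of $N$ corresponding to the given $\mathbb{Z}\times\mathbb{Z}$ subgroup, identify its compact core as a product $T^2\times I$, and then descend an incompressible torus to $N$ itself via the Torus Theorem.

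First, let $p\colon \widetilde{N}\to N$ be the covering associated to a subgroup $H\cong\mathbb{Z}\times\mathbb{Z}\subset\pi_1(N)$, so that $\pi_1(\widetilde{N})=H$. Since $N$ is orientable and irreducible, the Sphere Theorem gives $\pi_2(N)=0$; hence $\pi_2(\widetilde{N})=\pi_2(N)=0$ and $\widetilde{N}$ is itself irreducible. It is also open, orientable, and has finitely generated fundamental group.

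Next, I would apply Scott's Compact Core Theorem to $\widetilde{N}$ to obtain a compact connected $3$-submanifold $M\subset\widetilde{N}$ whose inclusion induces an isomorphism on $\pi_1$. After capping off any spherical boundary components by balls (available by irreducibility of $\widetilde{N}$), $M$ becomes irreducible, with nonempty boundary consisting of closed orientable surfaces of positive genus. A compact orientable irreducible $3$-manifold with nonempty boundary and fundamental group $\mathbb{Z}\times\mathbb{Z}$ must be homeomorphic to $T^2\times[0,1]$: this follows from the Loop Theorem (giving incompressibility of $\partial M$), an Euler-characteristic count forcing each boundary component to be a torus, and Waldhausen's theorem on cobordisms between tori. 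Consequently the middle torus $T=T^2\times\{1/2\}$ is incompressible in $M$, and therefore in $\widetilde{N}$.

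Finally, the composite $T\hookrightarrow\widetilde{N}\xrightarrow{\;p\;}N$ is $\pi_1$-injective but not necessarily an embedding. To upgrade it to an embedded incompressible torus in $N$, I would invoke the Torus Theorem of Scott (with contributions of Feustel, Jaco--Shalen and Johannson), or, as in Gabai's original argument, the sutured manifold hierarchy of \cite{gabai}: in an orientable irreducible $3$-manifold, a $\pi_1$-injective immersed torus yields either an embedded incompressible torus or a Seifert fibered structure; in the latter case, the $\mathbb{Z}\times\mathbb{Z}$ hypothesis produces an essential simple closed curve in the base orbifold whose vertical preimage is the desired embedded torus. The principal obstacle, both here and historically, is exactly this geometric promotion from an immersed to an embedded torus, which is the technical heart of the Torus Theorem; irreducibility of $N$ is essential throughout to exclude the spherical obstructions that arise in the associated tower constructions.
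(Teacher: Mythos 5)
The paper does not actually prove this statement: it is quoted, with a reference, from the proof of Corollary 8.6 of Gabai's convergence-groups paper \cite{gabai}, so the only question is whether your sketch stands on its own. Its first half is correct and standard: passing to the covering $\widetilde N$ with $\pi_1(\widetilde N)\cong\mathbb{Z}\times\mathbb{Z}$, taking a Scott compact core and identifying it with $T^2\times[0,1]$ does yield an embedded torus incompressible in $\widetilde N$, hence a $\pi_1$-injective immersed torus in $N$. (One small caveat: $\pi_2(\widetilde N)=0$ does not by itself give irreducibility of the cover; an embedded null-homotopic sphere a priori only bounds a homotopy ball, so you should quote Meeks--Simon--Yau, or the Poincar\'e conjecture, for irreducibility of covers.)

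The genuine gap is in the last step, the promotion from immersed to embedded. The Torus Theorem you invoke (Feustel, Scott, Jaco--Shalen, Johannson, completed in the closed case by Casson--Jungreis and Gabai) is a theorem about \emph{compact} $3$-manifolds, and $N$ is open; nothing in your sketch reduces to a compact manifold in a way that preserves the conclusion. If you apply the compact theorem to a compact submanifold $C\subset N$ containing the immersed torus, you obtain at best a torus incompressible in $C$, and since $\pi_1(C)\to\pi_1(N)$ need not be injective this does not give incompressibility in $N$. Your dismissal of the Seifert alternative is also wrong as stated: a small Seifert fibered space over $S^2$ with three exceptional fibers contains $\mathbb{Z}\times\mathbb{Z}$ yet its base orbifold carries no essential simple closed curve, and dealing with exactly this situation is the hard content of the theorem (it is why the closed Torus Theorem required the convergence-group machinery in the first place); in the open setting one needs a genuine use of noncompactness to exclude or handle it. So the sentence ``a $\pi_1$-injective immersed torus yields either an embedded incompressible torus or a Seifert fibered structure,'' applied to the open manifold $N$, is essentially the statement to be proved, and the final step is circular --- this is precisely the point the paper outsources to the proof of Gabai's Corollary 8.6. (A side remark: the ``sutured manifold hierarchy'' is not what \cite{gabai} is about; that paper concerns convergence groups acting on the circle, and the hierarchy technology lives in a different paper of Gabai.)
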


 \subsection{Proof of Theorem \ref{teo:subgrabelian}}
 The following result is crucial for to show the Theorem
 \ref{teo:subgrabelian}.

 \begin{proposition}
 \label{prop:abeliansubgroup}
 Let $\mathcal{F}$ be a foliation by planes of an open connected $n$-manifold
 $N$. If $H\cong \mathbb{Z}^k$ is a subgroup of $\pi_1(N)$, then $k<n$.
 \end{proposition}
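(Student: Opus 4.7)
The plan is to pass to the intermediate covering $p\colon \widehat{N}\to N$ corresponding to $H$, so that $\pi_1(\widehat{N})\cong\mathbb{Z}^k$, and then pin down the homotopy type of $\widehat{N}$ using Palmeira's theorem. First I would lift $\mathcal{F}$ to the universal cover $\widetilde{N}\to N$; the lift is again a foliation by planes, and the simply connected part of Theorem \ref{teo:Palmeira} (which requires neither transverse orientability nor closed leaves) forces $\widetilde{N}\cong\mathbb{R}^n$. The factor covering $\widetilde{N}\to\widehat{N}$ then has deck group $H\cong\mathbb{Z}^k$ acting freely and properly discontinuously on $\mathbb{R}^n$, so $\widehat{N}$ is aspherical with fundamental group $\mathbb{Z}^k$. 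In particular $\widehat{N}$ is a $K(\mathbb{Z}^k,1)$ and hence homotopy equivalent to the torus $\mathbb{T}^k$, giving $H_j(\widehat{N};\mathbb{Z}_2)\cong H_j(\mathbb{T}^k;\mathbb{Z}_2)$ for every $j$.

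Next I would compare this to the fact that $\widehat{N}$ is an $n$-manifold, possibly non-orientable. It must be non-compact: $p$ is continuous and surjective, so a compact $\widehat{N}$ would force the open manifold $N$ to be compact as well. The standard facts about manifold homology then give $H_j(\widehat{N};\mathbb{Z}_2)=0$ for every $j>n$, and, crucially, $H_n(\widehat{N};\mathbb{Z}_2)=0$ because a non-compact $n$-manifold has vanishing top-dimensional $\mathbb{Z}_2$-homology. Comparing with $H_*(\mathbb{T}^k;\mathbb{Z}_2)$, where $H_k(\mathbb{T}^k;\mathbb{Z}_2)\neq 0$, the case $k>n$ is immediately ruled out by the vanishing above dimension $n$, while $k=n$ is ruled out precisely by the non-compact top-homology vanishing. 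This leaves $k<n$.

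The main obstacle I anticipate is securing the \emph{strict} inequality $k<n$ rather than $k\le n$. Forming $\widehat{N}$ and seeing that it is $K(\mathbb{Z}^k,1)$ only rules out $k>n$ through generic dimension reasons; the equality case $k=n$ needs the two extra ingredients I singled out above, namely (i) the use of $\mathbb{Z}_2$-coefficients, so that no assumption on orientability of $\widehat{N}$ has to be imposed, and (ii) the fact that $\widehat{N}$ is non-compact, deduced from $N$ being open together with surjectivity of $p$. Once these two observations are in place, the argument closes cleanly by the Poincaré duality vanishing $H_n(\widehat{N};\mathbb{Z}_2)=0$ contradicting $H_n(\mathbb{T}^n;\mathbb{Z}_2)=\mathbb{Z}_2$.
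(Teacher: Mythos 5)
Your proof is correct and follows essentially the same route as the paper: pass to the covering associated to $H$, use the simply connected case of Palmeira's theorem to get $\widetilde{N}=\mathbb{R}^n$ and hence asphericity, identify the cover as a $K(\mathbb{Z}^k,1)\simeq \mathbb{T}^k$, and contradict the vanishing of top-dimensional homology of a non-compact manifold. Your use of $\mathbb{Z}_2$-coefficients to sidestep orientability and your explicit treatment of the case $k>n$ are minor refinements of the same argument.
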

 To prove this proposition we need the following classical result in algebraic
 topology, see for example \cite{hatcher}.

 \begin{lemma}
 \label{lem:topalg}
 Let $X$ be a normal space, $F$ a closed subspace of $X$ and $Y$
 an ENR space. Suppose that $f_0:F\to Y$ and $g:X\to Y$ are continuous
 maps. If $f_0$ is homotopic to $g_0=g|_F,$ then there exists a continuous map
 $f:X\to Y$ such that $f|_F=f_0$ and $f$ is homotopic to $g.$
 \end{lemma}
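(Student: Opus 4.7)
The plan is to reduce the lemma to a standard homotopy extension problem and then exploit the ENR structure of $Y$ to carry out the extension. Let $H:F\times I\to Y$ be a homotopy with $H(x,0)=g(x)$ and $H(x,1)=f_0(x)$ for $x\in F$, and set $A=(F\times I)\cup(X\times\{0\})$, which is closed in $X\times I$ because $F$ is closed in $X$. Glue $H$ and $g$ along $F\times\{0\}$ to obtain a continuous map $\Phi:A\to Y$ given by $\Phi(x,t)=H(x,t)$ on $F\times I$ and $\Phi(x,0)=g(x)$ on $X\times\{0\}$. If I can extend $\Phi$ to a continuous $\tilde H:X\times I\to Y$, then $f:=\tilde H(\cdot,1)$ will satisfy $f|_F=f_0$ and $\tilde H$ will be a homotopy from $g$ to $f$, so the whole task reduces to extending $\Phi$.

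Next, I would use the ENR hypothesis: embed $Y\subset\mathbb{R}^N$ as a neighborhood retract, with retraction $r:U\to Y$ and $U\subset\mathbb{R}^N$ open. Applying Tietze's extension theorem coordinatewise to the components of $\Phi$ (using the normality of the pair $X\times I$) produces an extension $\Psi:X\times I\to\mathbb{R}^N$. Set $V=\Psi^{-1}(U)$, an open neighborhood of $A$. If $V$ were all of $X\times I$, I would finish by taking $\tilde H=r\circ\Psi$; the remaining task is therefore to confine the map into $V$ without altering the data on $A$.

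I would achieve this by reparameterizing the $I$-factor. For each $x\in F$ the slice $\{x\}\times I$ is contained in $V$, so compactness of $I$ and the tube lemma yield an open $W\subset X$ with $F\subset W$ and $W\times I\subset V$. Urysohn's lemma applied to the disjoint closed sets $F$ and $X\setminus W$ in the normal space $X$ produces $\mu:X\to I$ with $\mu\equiv 1$ on $F$ and $\mu\equiv 0$ on $X\setminus W$. Define
\[
\tilde H(x,t)=r\bigl(\Psi(x,\mu(x)\,t)\bigr).
\]
The point $(x,\mu(x)t)$ always lies in $V$: if $\mu(x)>0$ then $x\in W$, so $\{x\}\times I\subset V$; if $\mu(x)=0$ then $(x,\mu(x)t)=(x,0)\in X\times\{0\}\subset A\subset V$. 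So $\tilde H$ is well defined and continuous into $Y$, and one computes $\tilde H(x,0)=r(g(x))=g(x)$ and, for $x\in F$ (where $\mu(x)=1$), $\tilde H(x,t)=r(H(x,t))=H(x,t)$. Hence $\tilde H|_A=\Phi$, and $f:=\tilde H(\cdot,1)$ is the required map.

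The main obstacle is precisely this reparameterization step: one must keep the modified homotopy inside the domain $U$ of $r$ while still recovering $\Phi$ on $A$. The combination of the tube lemma (to fatten $F$ to a neighborhood whose product with $I$ lies in $V$) and the Urysohn factor $\mu$ handles both requirements simultaneously, while the ENR hypothesis is used exactly to convert an $\mathbb{R}^N$-valued Tietze extension back into a $Y$-valued map via $r$.
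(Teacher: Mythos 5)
Your argument is correct and complete: the reduction to extending $\Phi$ off the closed set $A=(F\times I)\cup(X\times\{0\})$, the Tietze extension into $\mathbb{R}^N$ followed by the retraction $r:U\to Y$, and the reparameterization $(x,t)\mapsto(x,\mu(x)t)$ with a Urysohn function supported in the tube $W\times I\subset V$ are exactly the standard proof of Borsuk's homotopy extension theorem for ENR targets. The paper itself does not prove this lemma at all --- it is stated as a classical fact with a pointer to Hatcher --- so you have supplied a proof where the authors supply only a citation; every step of yours checks out (continuity of $\Phi$ by the pasting lemma on the two closed pieces, $A\subset V$, the tube lemma, and the verification that $(x,\mu(x)t)\in V$ in both cases). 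The one point worth flagging is your parenthetical appeal to ``the normality of the pair $X\times I$'': normality of $X$ alone does not imply normality of $X\times I$ (by Dowker's theorem this requires $X$ to also be countably paracompact, and Dowker spaces exist), so as literally stated for an arbitrary normal $X$ the Tietze step needs that extra hypothesis. This is a defect of the lemma's hypotheses rather than of your proof, and it is harmless in the paper, where the lemma is only ever applied with $X$ a manifold, hence metrizable, so $X\times I$ is metrizable and normal.
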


 Let $X$ be a topological space and $G$ a group. We say that $X$ is
 a \textit{$K(G,1)$-space} if $\pi_1(X)=G$ and $\pi_p(X)=\{1\}, p\geq 2.$

 \begin{proof}[Proof of Proposition \ref{prop:abeliansubgroup}]
 Firstly we claim that
 \begin{itemize}
   \item[(a)] \textit{$N$ is a $K(\pi_1(N),1)$-space}.
 \end{itemize}

 In fact, let $\widetilde{N}$ be universal covering of $N$. It follows,  by Theorem \ref{teo:Palmeira},
 that $\widetilde{N}=\mathbb{R}^n$. Consequently, $\pi_p(\widetilde{N})=\{1\},$ for all
 $p=1,2,\dots,n$.
 Consider the exact sequence of homotopy groups
 $$
 \cdots \to \pi_p(F)\to \pi_p(\widetilde{N})\to \pi_p(N)\to
 \pi_{p-1}(F)\to \cdots
 $$
 where $F=\pi_1(N)$ (the fiber). Since $\pi_p(F)=\{1\},\ \,p\geq 1,$
 we obtain that $\pi_p(\widetilde{N})= \pi_p(N)$ for all $p\geq 1.$
 This proves our claim.

 Now, we have two possibilities.
 \begin{itemize}
    \item[(b)]\textit{Case $H=\pi_1(N)$:}
 \end{itemize}

 By contradiction, we suppose that $\pi_1(N)\cong \mathbb{Z}^n.$
    Since $N$ and $\mathbb{T}^n$ are $K(\mathbb{Z}^n,1)$-spaces,
    we obtains that $N$ and $\mathbb{T}^n$ have the same
    homotopic type. Consequently, by a classical theorem of
    algebraic topology (Whitehead's Theorem), $H_p(N)=H_p(\mathbb{T}^n)$
    for all $p=1,\dots,n.$ But, as $N$ is open and connected,
    $H_n(N)=0$. This contradicts the fact that
    $H_n(N)=H_n(\mathbb{T}^n)=\mathbb{Z}$
    and, therefore proves that $k<n.$

  \begin{itemize}
  \item[(c)] \textit{General case:}
  \end{itemize}

  Let us consider $p:\widehat{N}\to
    N$ be the covering map associated to $H$,
    that is, $\pi_1(\widehat{N})=H.$ Let $\widehat{\mathcal{F}}$ be the foliations of
    $\widehat{N}$ induced by $\mathcal{F}$, this foliation is by planes.
    Thus, by the first case we obtain $k<n$.
 \end{proof}

 \vspace{0.5cm}
 \begin{proof}[Proof of Theorem \ref{teo:subgrabelian}]
 By Proposition \ref{prop:abeliansubgroup}, every Abelian subgroup
 of $\pi_1(N)$ is isomorphic to $\mathbb{Z}$ or to $\mathbb{Z}^2.$ Consequently the fundamental
  group of $N$ is a free product of the form
 $$
 \pi_1(N)=H_1\ast \cdots \ast H_{\ell}\,,
 $$
 where $H_j,\ j=1,\dots, \ell,$ are subgroups of $\pi_1(N)$ isomorphics
 to $\mathbb{Z}$ or $\mathbb{Z}^2,$
 respectively.

 We suppose that $j\in \{1,\dots, \ell\}$ is such that
 $H_j$ is isomorphic to $\mathbb{Z}^2.$
 By Theorems \ref{teo:rosenberg} and \ref{teo:gabaiNaberto}, there exists an incompressible
 2-torus $T_j$ contained in $N$ such that $\pi_1(T_j)=H_j$. Deforming $T_j$, if necessary, by an isotopy
 of identity we can assume that $T_j$ is in general position with respect to $\mathcal{F}.$
 Then, $\mathcal{F}$ induces a foliation $\mathcal{G}_j$ on $T_j$ having finitely many
 singularities each of which is locally topologically equivalent either to a center or
 to a saddle point of a vector field.
 We can assume that the foliation $\mathcal{G}_j$ is defined by a vector
  field $G_j$. Certainly, we may assume that $T_j$ has been chosen so
  that no pair of singularities of $G_j$ is into the same leaf of $\mathcal{F},$
  in other words, $G_j$ has no saddle connections.

  \begin{itemize}
    \item[(a)] \textit{$G_j$ is topologically equivalent to irrational
  flow.}
  \end{itemize}

  In fact, by using Rosenberg's arguments (see \cite[pag. 137]{rosenberg}),
  we may deform $T_i$ by an isotopy of identity so that $G_j$ has no
  singularities. Hence, since $\mathcal{F}$ is by planes and $T_j$ is incompressible, $G_j$
  has no closed orbit. Therefore, $G_j$ is topologically equivalent to irrational
  flow.

  \vspace{.5cm}
  Let $N_j$ be the saturated by $\mathcal{F}$ of $T_j$. Clearly, $N_j$ is a open
  3-submanifold of $N$ invariant by $\mathcal{F}$). We will show that
  $N_j$ is homeomorphic to $\mathbb{T}^2\times \mathbb{R}.$
  Let $\mathcal{F}_j$ be the restriction of $\mathcal{F}$
  at $N_j$ and $p:T\mathcal{F}_j\to N_j$ be the vector bundle such that for all $x\in N_j$
  the fiber $p^{-1}(x)$ is tangent at $x$ to leaf of $\mathcal{F}_j$
  (which is also a leaf of $\mathcal{F}$) passing by $x.$
  By deforming $T_j$, if necessary, we can assume that:

  \begin{itemize}
    \item[(b)] \textit{If $\theta:T_j\to TN_j$ is the normal vector field to $T_j$ in $N_j$, then
    $\theta$ is tangent to $\mathcal{F}$, that is, $\theta: T_j \to p^{-1}(T_j)$.
    }
  \end{itemize}

  By (a) of the proof of Proposition \ref{prop:abeliansubgroup}, we have that $N_j$ is a
  $K(\pi_1(N_j),1)$-space. Then, by Whitehead's theorem, the inclusion $i:T_j\to N_j$
  is a homotopy equivalence. Consequently:
  \begin{itemize}
    \item[(c)]\textit{there exists a continuous map $f:N_j\to T_j$
  such that $f\circ i$ is homotopic to $id_{T_j}$ and $i\circ f$ is homotopic to $id_{N_j}$}
  \end{itemize}

 We claim that
 \begin{itemize}
   \item[(d)] \textit{there exists a retraction $r:N_j\to T_j$ such that $r|_{T_j}=id_{T_j}$
   and $r$ is homotopic to $f.$}
 \end{itemize}
 Indeed, calling $X=N_j=Y,\ F=T_j,\ f_0=f|_{T_j}$ e $g=f:N_j\to N_j$,
 by Lemma \ref{lem:topalg}, there exists $r:N_j\to T_j$ such that
 $r|_{T_j}=id_{T_j}$ and $r$ is homotopic to $f.$

  \begin{itemize}
    \item[(e)] \textit{$N_j$ is homeomorphic to $\mathbb{T}^2\times \mathbb{R}$.}
  \end{itemize}
  The pull-back of the vector bundle $p^{-1}(T_j)\to T_j$ by the retraction $r:N_i\to T_i$ given in (d)
  is exactly the vector bundle $T\mathcal{F}_j\to N_j$ since
  $r^{\ast}(p^{-1}(T_j))=T\mathcal{F}_j$. Consequently, the vector
  field $\Theta=r^{\ast}\theta$ is tangent to $\mathcal{F}_j$ and is
  normal to $T_j.$ This implies that $N_j$ is homeomorphic to
  $\mathbb{T}^2\times \mathbb{R}.$

  Finally, if for $i\neq j$ the group $H_i$ is isomorphic to $\mathbb{Z}^2$, then
  we obtain another open 3-submanifold of $N$ which is invariant by $\mathcal{F}$
  and homeomorphic to $\mathbb{T}^2\times \mathbb{R}.$ Furthermore, $N_i\cap N_j=\emptyset$.
 \end{proof}

 \subsection{Proof of Theorem \ref{teo:abeliano}}
 \vspace{.5cm}
 \begin{proof}[Proof of Theorem \ref{teo:abeliano}]
 By Theorem \ref{teo:subgrabelian}, there exists $N_0$
 an open submanifold of $N$ homeomorphic to $\mathbb{T}^2\times \mathbb{R}$
 which is invariant by $\mathcal{F}$ such that $(\mathcal{F}|_{N_0},N_0)$
 is topologically equivalent to $(\mathcal{F}_0 \times \mathbb{R}, \mathbb{T}^2\times \mathbb{R})$
 where $\mathcal{F}_0$ is the foliation on $\mathbb{T}^2$ defined
 by the irrational flow. Note that $\pi_1(N_0)=\pi_1(N)\cong \mathbb{Z}^2$

  Let $B$ be a connected component of
  $N\setminus \overline{N_0}$. Then
  \begin{itemize}
  \item[(a)]\textit{$B$ is homeomorphic to $\mathbb{R}^3$ and every leaf in $B$ is closed.}
  \end{itemize}

   Firstly we claim that $\pi_1(B)= \{1\}$. In fact,
   by using the Van Kampen Theorem, we have
   that $\pi_1(B)\ast \pi_1(N_0)$ is subgroup of $\pi_1(N)$ and hence, since $\pi_1(N_0)=\pi_1(N)$,
   we have that  $\pi_1(B)= \{1\}$. Now, we are going to show that
   every leaf in $B$ is closed.
   We suppose, by contradiction, that there exists a non closed leaf. By classical arguments in the theory
   of foliations, there exists a simple closed curve $\gamma \subset B$ which is transversal  to $\mathcal{F}.$
   Moreover, $\gamma$ is not nullhomotopic in $B$, otherwise, by using Heafliger's arguments,
   we obtain a loop $\alpha$ in a leaf of $\mathcal{F}$ which is non trivial in this leaf contradicting
   the fact that all the leaves are planes. Therefore $\gamma$ is not
   nullhomotopic in $B$, but this contradicts that $\pi_1(B)= \{1\}$. Thus
   every leaf in $B$ is closed. Thence, by Palmeira's
   Theorem, we have that $B$ is homeomorphic to $\mathbb{R}^3$ and $(\mathcal{F}|_B,B)$
   is topologically equivalent to $(\mathcal{F}_b\times \mathbb{R},\mathbb{R}^2\times \mathbb{R})$
   where $\mathcal{F}_b$ is an foliation by lines on $\mathbb{R}^2.$

   Finally by using the Van Kampen Theorem it follows that every
   leaf in $B$ separates $N.$


 \end{proof}

\begin {thebibliography}{99}
\bibitem{ArrBia}
    {\sc J. A. \'Alvarez L\'opez, J. L. Arraut and C. Biasi},
    {\it Foliations by planes and Lie group actions, \/} Ann. Pol. Math., {\bf 82} (2003), 61--69.

\bibitem{Cha-Rosen}
    {\sc G. Chatelet and H. Rosenberg}, {\it Un théoreme de conjugaison des feuilletages, \/}
    Ann. Inst. Fourier, {\bf 21, 3}, (1971), 95--106.

\bibitem{gabai}
    {\sc David Gabai}, {\it Convergence groups are Fuchsian groups, \/}
    Ann. of Math. {\bf 136}, (1992), 447--510.

\bibitem{God}
    {\sc C. Godbillon}, {\it Feuilletages \'Etudes g\'eom\'etriques, \/}
    Progress in Mathematics, Birkh\" auser, (1991).
\bibitem{hatcher}
    {\sc A. Hatcher}, {\it Algebraic Topology, \/}
    Cambridge University Press,  (2002).

\bibitem{Novikov}
    {\sc S. P. Novikov}, {\it Topology of foliations, \/} Trudy nzosk. Mm. Obshch.
    {\bf 14 }, 513-583.
\bibitem{pal}
    {\sc C. F. B. Palmeira}, {\it Open manifolds foliated by planes, \/}
    Ann. of Math. {\bf 107} (1978), 109--131.

\bibitem{rosen-rossa1}
    {\sc H. Rosenberg and R. Roussarie},  \textit{Topological equivalence of Reeb foliations},
    Topology, {\bf 9}, (1970), 231--242.

\bibitem{rosen-rossa}
    {\sc H. Rosenberg and R. Roussarie},  \textit{Reeb foliations},
    Ann. of Math. {\bf 91}, (1970), 01--24.

\bibitem{rosenberg}
    {\sc Harold Rosenberg}, {\it Foliations by planes, \/}
    Topology {\bf 7}, (1968), 131--138.

\bibitem{rosenberg2}
    {\sc Harold Rosenberg},  \textit{Actions of $\mathbb{R}^n$ on manifolds},
    Comment. math. helvet. (1966), 36--44.

\bibitem{serre}
    {\sc J-P Serre,} {\it Sur la dimension cohomologique des grupes profinis, \/} \, Topology,
    {\bf 3} (1965), 413--420.

\bibitem{sta}
    {\sc Jonh R. Stallings,} {\it On torsion-free groups with infinitely many ends, \/} \, Ann. of Math.,
    {\bf 88} 2 (1968), 312--334.

\end {thebibliography}

\end{document}